\documentclass[a4paper,fleqn]{cas-sc}

\usepackage[numbers]{natbib}

\usepackage{hyperref}
\usepackage{cleveref}

\usepackage{algorithm}
\usepackage{algpseudocode}
\usepackage{amsthm}

\usepackage{graphicx}
\usepackage{subcaption}
\graphicspath{{figs/}}
\usepackage{placeins}
\newtheorem{theorem}{Theorem}[section]
\newtheorem{lemma}[theorem]{Lemma}

\newdefinition{definition}[theorem]{Definition}
\newdefinition{remark}[theorem]{Remark}

\def\tsc#1{\csdef{#1}{\textsc{\lowercase{#1}}\xspace}}
\tsc{WGM}
\tsc{QE}

\def\E{\mathbb{E}}
\def\reals{\mathbb{R}}

\def\range{{\rm range}}

\newcommand{\spcg}{SPCG}

\newcommand{\seq}[2]{#1_{\text{\tiny$\triangle t$}}^{#2}}

\begin{document}
\let\WriteBookmarks\relax
\def\floatpagepagefraction{1}
\def\textpagefraction{.001}

\ExplSyntaxOn
\RenewDocumentCommand \printorcid {} {}
\cs_set:Npn \__first_footerline: { \mbox{} }
\cs_new:Npn \my_initials:n #1
{
	\regex_match:nnTF { \. } { #1 }
	{ #1 }
	{
		\seq_set_split:Nnn \l_tmpa_seq { - } { #1 }
		\seq_map_indexed_inline:Nn \l_tmpa_seq
		{
			\int_compare:nNnTF { ##1 } = { 1 }
			{ \tl_head:n { ##2 }. }
			{ -\tl_head:n { ##2 }. }
		}
	}
}

\RenewDocumentCommand \eadauthor {}
{
	\seq_map_inline:Nn \l_stm_au_seq
	{
		\my_initials:n { ##1 }
	}
	\space \l_stm_au_sn_seq
}
\ExplSyntaxOff

\shorttitle{Sequential preconditioned conjugate gradient method}

\shortauthors{}

\title [mode = title]{Sequential Preconditioned Conjugate Gradient Method for Linear Statistical Models}

\author[1]{Guan-Yu Chen}

\author[1]{Dong-Yue Xie}

\author[1]{Xi Yang}
\cormark[1]
\ead{yangxi@nuaa.edu.cn}

\author[1]{Zun-Hao Zheng}

\affiliation[1]{
	organization={School of Mathematics, Nanjing University of Aeronautics and Astronautics},
	city={Nanjing},
	postcode={211106}, 
	country={China}
}
\cortext[1]{Corresponding author}

\begin{abstract}
We propose a randomized iterative method for the ordinary least-squares estimation problem in large-scale linear statistical models, namely the Sequential Preconditioned Conjugate Gradient Method (SPCG). SPCG constructs a sequence of sketched least-squares subproblems with increasing sketch sizes, applies PCG as the inner solver, and warm-starts each subproblem from the previous solution. A final refinement stage is then performed on the full-scale problem. Since most iterations are carried out on smaller subproblems, the overall computational cost is significantly reduced. We establish the convergence theory, prove that SPCG attains OLS prediction accuracy, and derive per-subproblem iteration bounds and complexity estimates. Numerical experiments show that SPCG reaches the target prediction accuracy with fewer iterations and less CPU time than full-data PCG and Iterative Double Sketching (IDS).
\end{abstract}


\begin{keywords}
 preconditioned conjugate gradient\sep
 least-squares estimation \sep
 randomized algorithms \sep
 Sketch-and-Solve \sep
 Sketch-and-Precondition \sep
\end{keywords}

\maketitle

\section{Introduction}\label{sec:intro}
Consider the linear statistical model with the true parameter $\beta$ as follows
\begin{equation}\label{eq:model}
	Y=X\beta+\zeta,
\end{equation}
where $X\in\mathbb{R}^{N\times d}$ is a feature matrix of full column rank with sample size $N$ and feature size $d$, $N\gg d$, $Y\in\mathbb{R}^{N}$, and $\zeta$ is a zero-mean noise vector with covariance matrix $\sigma^{2}I_N$. The ordinary least-squares (OLS) estimator is
\begin{equation}\label{eq:ols}
	\hat\beta
	=\arg\min_{u\in\mathbb{R}^{d}}\frac{1}{2}\|Y-Xu\|_2^2
	=(X^{\top}X)^{-1}X^{\top}Y.
\end{equation}
For large-scale problems, direct factorization methods typically require $O(Nd^2)$ arithmetic operations, which can be computationally prohibitive. Classical iterative solvers avoid explicit factorization, but the iterations still involve repeated products with the full data matrix $X$, and their convergence may deteriorate severely when $X$ is ill-conditioned. These limitations have motivated randomized algorithms based on sketching techniques \cite{woodruff2014sketching,drineas2016randnla,martinsson2020randomized}, which seek to approximate the OLS estimator more efficiently by compressing the full least-squares problem into a lower-dimensional sketched problem or improve the conditioning of the original problem through efficient randomized preconditioning. Such ideas have led to substantial progress in large-scale regression, giving rise to a wide range of methods based on Sketch-and-Solve (SAS), Iterative Sketching (IS), and Sketch-and-Precondition (SAP) frameworks \cite{pilanci2016iterative,avron2010blendenpik,meng2014lsrn,lacotte2020optimal,wang2022iterative}.

The present work is closely related to the SAS and SAP paradigms. SAS replaces $(X,Y)$ with the sketched data $(SX,SY)$ and solves the resulting lower-dimensional least-squares problem, thereby substantially reducing the computational cost. As shown in \cite{raskutti2016statistical,dobriban2019asymptotics}, the asymptotic accuracy of an SAS estimator depends explicitly on the sketch size $m$. A small $m$ results in low accuracy, whereas increasing $m$ improves the estimator accuracy but also increases the cost of solving the sketched problem. Thus, SAS involves an inherent tradeoff between statistical accuracy and computational efficiency. By contrast, SAP can attain the accuracy of the OLS estimator by using a randomized sketch to construct a preconditioner and then applying an iterative solver, typically a Krylov method, to the original problem. Although the improved conditioning can accelerate convergence, each iteration still requires operations with the full data matrix $X$, which limits the overall computational savings.

These observations motivate the proposed SPCG method, which combines the low cost of SAS with the high accuracy of SAP through a two-stage sequential PCG scheme. In the first stage, SPCG efficiently constructs a carefully designed sequence of sketched least-squares subproblems with increasing sketch sizes, whose estimators provide progressively more accurate approximations to the OLS estimator. Each subproblem is solved using PCG within the SAP framework. Since the statistical accuracy of each subproblem is determined by its sketch size, only a few iterations are required to reach the corresponding accuracy level, and the resulting estimator serves as a high-quality initial point for the next, more accurate subproblem. In the second stage, the same SAP-based PCG is applied to the full problem, requiring only a few full-data iterations to attain the prediction accuracy of the OLS estimator. In this way, most iterations are performed on smaller sketched subproblems, thereby substantially reducing the overall computational cost. Using the SRHT sketching operator, we establish the convergence theory of SPCG and prove that its final prediction error reaches the OLS statistical level. Numerical experiments further show that SPCG consistently requires fewer iterations and less CPU time than Iterative Double Sketching (IDS) and full-data PCG.

The remainder of this paper is organized as follows. In \cref{sec:SPCG}, we present the SPCG method in detail. In \cref{sec:conv}, we establish the convergence properties of SPCG and analyze its computational complexity. \Cref{sec:experiments} reports the numerical experiments that verify the efficiency of our method. Finally, conclusions are drawn in \cref{sec:conclusion}.

\section{The SPCG method}\label{sec:SPCG}
\subsection{Notations and preliminaries}
Throughout the paper, $\|\cdot\|$ denotes the Euclidean norm for vectors and the spectral norm for matrices, while $\kappa(G)$ denotes the spectral condition number of a matrix $G$. Let $X=U_XD_XV_X^\top$ be the compact singular value decomposition of $X$, where $U_X\in\reals^{N\times d}$ and $V_X\in\reals^{d\times d}$ have orthonormal columns and $D_X\in\reals^{d\times d}$ is diagonal. The $d\times d$ identity matrix is denoted by $I_d$. Expectations are taken with respect to the noise $\zeta$, while probabilities involving sketching are taken with respect to the corresponding sketching matrices. All logarithms are natural.
\begin{definition}[Subspace embedding]
	Let $X\in\reals^{N\times d}$, and let $U$ be an orthonormal basis for $\range(X)$. For $\epsilon>0$, a matrix $S\in\reals^{m\times N}$ is called a $(1+\epsilon)$-subspace embedding for $\range(X)$ if
	\(
	\|U^\top S^\top S U-I_d\|\leq\epsilon.
	\)
\end{definition}
Intuitively, a subspace embedding approximately preserves the geometry of $\range(X)$ in a lower-dimensional space, allowing the original problem to be approximated efficiently by solving the sketched problem defined by $(SX,SY)$ with a controlled loss of accuracy. Classical sketching operators include dense sketching operators such as Gaussian sketching, sparse sketching operators such as CountSketch, and structured sketching operators that admit fast matrix-vector multiplication, such as the subsampled randomized Hadamard transform (SRHT) \cite{woodruff2014sketching,murray2023randomized}. Existing theoretical results show that Gaussian sketching provides strong guarantees but is computationally expensive to apply, whereas CountSketch is fast to apply but typically requires a larger sketch size to achieve comparable embedding guarantees.

In view of the above trade-off, we mainly consider the SRHT in this paper. An SRHT has the form $S=\sqrt{N/m}\,(HDP)_{\mathcal I,:}$ when $N$ is a power of two. Here $H$ is the normalized Walsh--Hadamard matrix, $D$ is a random Rademacher diagonal matrix, $P$ is a random permutation matrix, and $\mathcal I$ is a uniformly sampled set of $m$ row indices. The following lemma gives a probabilistic subspace embedding guarantee for SRHT.

\begin{lemma}[{\cite{wang2022iterative}}]\label{thm:srht}
	Let $S\in\reals^{m\times N}$ be an SRHT sketching matrix and let $U\in\reals^{N\times d}$ have orthonormal columns. For $\epsilon,\delta\in(0,1)$, if
	\(
	m\geq c\epsilon^{-2}\left(d+\log(N/\delta)\right)
	\log(ed/\delta),
	\)
	where $c>0$ is a constant, then
	\(
	\Pr\{\|U^\top S^\top S U-I_d\|>\epsilon\}\leq \delta.
	\)
\end{lemma}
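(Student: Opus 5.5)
The plan follows the standard two-step argument for structured random embeddings (Tropp; Boutsidis--Gittens): the randomized Hadamard rotation makes the leverage scores of $U$ nearly uniform, and uniform row sampling from a matrix with flat leverage scores is then a subspace embedding by the matrix Chernoff inequality. The randomness of $(D,P)$ and the randomness of $\mathcal I$ are independent, so we can condition on the first step and apply the second. Throughout we use $\|U^\top S^\top SU-I_d\|\le\epsilon$ exactly as in the Definition.

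\emph{Step 1 (flattening).} Set $W=HDPU$. Since $HDP$ is orthogonal, $W$ also has orthonormal columns. Fix a row index $i$. The row $e_i^\top W$ equals $\sum_j h_{ij}D_{jj}(PU)_{j,:}$ with $|h_{ij}|=N^{-1/2}$. This is a Rademacher sum of vectors in $\reals^d$. Its squared norm is a Lipschitz convex function of the sign vector, with mean at most $d/N$ and Lipschitz constant $N^{-1/2}$. Talagrand's concentration inequality for Rademacher sums, together with a union bound over the $N$ rows, then gives
\[
\max_i\|e_i^\top W\|_2\le \sqrt{d/N}+\sqrt{8\log(2N/\delta)/N}
\]
with probability at least $1-\delta/2$. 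Equivalently, $\max_i\|e_i^\top W\|^2\le C(d+\log(N/\delta))/N$. The permutation $P$ plays no role in this bound.

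\emph{Step 2 (sampling).} Condition on the event of Step 1. Then
\[
U^\top S^\top SU=\frac{N}{m}\sum_{i\in\mathcal I}W_{i,:}^\top W_{i,:}.
\]
Its expectation under uniform sampling is $W^\top W=I_d$. Each summand is positive semidefinite with norm at most $L=C(d+\log(N/\delta))/m$. Sampling without replacement is dominated by sampling with replacement (Gross--Nesme), so the matrix Chernoff bound applies and gives
\[
\Pr\{\lambda_{\max}\ge1+\epsilon \text{ or } \lambda_{\min}\le1-\epsilon\}\le 2d\,e^{-c'\epsilon^2/L}.
\]
Requiring this to be at most $\delta/2$ gives $m\ge c\,\epsilon^{-2}(d+\log(N/\delta))\log(ed/\delta)$ for a suitable constant $c$; here $\log(2d/\delta)\lesssim\log(ed/\delta)$. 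A union bound over the two failure events completes the proof.

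\emph{Main obstacle.} The delicate step is Step 1. A naive per-coordinate Hoeffding bound followed by a union bound over the $d$ coordinates would give a bound of the form $d\log(Nd/\delta)/N$, which is too weak. We therefore need the vector-valued concentration, via Talagrand, or equivalently Hoeffding applied to $\|\cdot\|$ through a convex Lipschitz argument. This yields the additive form $d+\log(N/\delta)$ instead of a product. The rest of the argument, in particular the Chernoff step, is routine. Since the lemma is quoted from \cite{wang2022iterative}, it would also suffice to cite that reference and check that its normalization of $S$ matches $\sqrt{N/m}(HDP)_{\mathcal I,:}$.
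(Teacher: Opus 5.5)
The paper gives no proof of this lemma. It is quoted from \cite{wang2022iterative} and used as a black box in Lemma~\ref{lem:simultaneous-embedding}, so there is no in-paper argument to match. Your proposal supplies the argument behind that citation, and it is correct: it is the standard Tropp/Boutsidis--Gittens proof. The randomized Hadamard rotation flattens the row norms of $W=HDPU$ to order $\sqrt{(d+\log(N/\delta))/N}$. Uniform row sampling plus matrix Chernoff, transferred to sampling without replacement, then gives the embedding.

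The paper's normalization $S=\sqrt{N/m}(HDP)_{\mathcal I,:}$ is exactly the one that makes $\E[U^\top S^\top SU\mid D,P]=W^\top W=I_d$ in your Step~2, so no rescaling is needed. Conditioning on $(D,P)$ and splitting the failure budget $\delta/2+\delta/2$ is also handled correctly.

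Compared with the citation, your route makes the sample-size condition transparent. The additive factor $d+\log(N/\delta)$ comes from the union bound over the $N$ rows in the flattening step. The factor $\log(ed/\delta)$ comes from the dimensional prefactor $2d$ in matrix Chernoff. The citation buys brevity and inherits whatever constants the reference proves.

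One slip in Step~1 should be fixed. It is the norm $\varepsilon\mapsto\|\sum_j\varepsilon_j h_{ij}(PU)_{j,:}\|$, not its square, that is convex and $N^{-1/2}$-Lipschitz. The Lipschitz constant is at most $\max_j|h_{ij}|\,\|PU\|=N^{-1/2}$. Its mean is at most $\sqrt{d/N}$ by Jensen, because the mean of the square is exactly $d/N$. The squared norm is not $N^{-1/2}$-Lipschitz. Your displayed bound $\sqrt{d/N}+\sqrt{8\log(2N/\delta)/N}$ is precisely what the Rademacher concentration inequality gives for the norm, so the conclusion is unaffected.

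Similarly, with a $\delta/2$ budget the Chernoff step produces $\log(4d/\delta)$ rather than $\log(2d/\delta)$. This is still at most $\log 4\cdot\log(ed/\delta)$, since $\log(d/\delta)>0$, and is absorbed into $c$.
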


\subsection{The framework}\label{sec:framework}
Let $S_i\in\reals^{m_i\times N}$ for $i=1,\ldots,K$, where $d<m_1<\cdots<m_K<N$. The $i$th sketched least-squares estimator is $\tilde{\beta}^{i}=\arg\min_{u\in\reals^d}\frac{1}{2}\|S_iXu-S_iY\|^2$, and its intrinsic prediction-error level is $\delta_i:=\E\|X(\tilde{\beta}^{i}-\beta)\|$. As the sketching dimension $m_i$ increases, $\delta_i$ generally decreases \cite{dobriban2019asymptotics}, yielding a sequence of progressively more accurate subproblems.

The SPCG method solves the original problem \eqref{eq:ols} through a two-stage sequential procedure. In the first stage, we construct a sequence of sketched subproblems defined above with increasing sketching dimensions and solve them sequentially, using the output $\beta^i$ of the $i$th subproblem as a warm start for the $(i+1)$th subproblem. The target accuracy is $\E\|X(\beta^i-\beta)\|\leq[1+o(1)]\delta_i$, so each subproblem is solved only to the intrinsic accuracy of its sketched estimator rather than to numerical precision. In the second stage, we use the high-quality approximation $\beta^K$ to initialize an iterative solve of the full problem \eqref{eq:ols}, so only a few full-data iterations are needed to attain the prediction accuracy of the OLS estimator. This sequential combination of inexpensive sketched iterations and a few full-data iterations substantially reduces the overall computational cost without sacrificing accuracy, thus providing a more reasonable strategy than conventional full-data iterative methods.

The $K$ subproblems can be constructed efficiently using a nested SRHT construction. We compute $(Z_X,Z_Y)=(HDPX,HDPY)$ once and choose nested random row-index sets $\mathcal I_1\subset\cdots\subset\mathcal I_K$ with $|\mathcal I_i|=m_i$ and $m_2/m_1=\cdots=m_K/m_{K-1}=2$. The sketched data are then obtained as $(S_iX,S_iY)=\sqrt{N/m_i}\bigl((Z_X)_{\mathcal I_i,:},(Z_Y)_{\mathcal I_i}\bigr)$. Thus, all $K$ sketched data pairs are obtained from a single transformation of $(X,Y)$, avoiding $K$ separate applications of the sketching operators. The dominant cost is therefore a single Hadamard transform, requiring $O(Nd\log N)$ operations.

The \spcg\ method uses PCG as the inner solver in both stages. Using an independent sketching matrix $\widehat S\in\reals^{r\times N}$, we form the fixed sketched Hessian $\widehat H=(\widehat SX)^\top\widehat SX$ once and reuse it as a common preconditioner for all PCG solves under the SAP framework, where $r=O(d\log d)$ and $m_1>r$. Let $a_i$ be the number of PCG iterations for the $i$th sketched subproblem, and $T^\dagger=\sum_{i=1}^K a_i$. Algorithm~\ref{Alg-SPCG} summarizes the resulting method with a total iteration budget $T$ and an initial estimator $\beta_0$. We use $\operatorname{PCG}(A,b,\widehat H,\beta_{\text{init}},a)$ to denote the iterate obtained after $a$ PCG iterations for the least-squares problem defined by $(A,b)$, starting from $\beta_{\text{init}}$ and preconditioned by $\widehat H$. One PCG iteration on the $i$th sketched system costs $\{(4m_i+13)d+2d^2\}\,\mathrm{FLOPs}$, whereas one iteration on the full system costs $\{(4N+13)d+2d^2\}\,\mathrm{FLOPs}$. Hence, the first-stage iterations are substantially cheaper when $m_i\ll N$.


\begin{algorithm}[t]
	\caption{The SPCG Algorithm}
	\label{Alg-SPCG}
	\begin{algorithmic}[1]
		\State {\bfseries Input:} $X\in\mathbb{R}^{N\times d}$,
		$Y\in\mathbb{R}^{N}$,
		$\{(S_iX,S_iY)\}_{i=1}^{K}$, $\{a_i\}_{i=1}^{K}$, $\widehat{H}\in\mathbb{R}^{d\times d}$, $\beta_0$, and $T$.
		\State $T^{\dagger}\leftarrow\sum\limits_{i=1}^{K}a_i$,
		$t\leftarrow0$
		
		\Statex
		\textemdash\textemdash\textemdash\textemdash\textemdash
		\enspace\textsc{Stage I}\enspace
		\textemdash\textemdash\textemdash\textemdash\textemdash
		
		\For{$i\leftarrow1$ {\bfseries to} $K$}
		\State $\beta_{t+a_i}\leftarrow
		\text{PCG}(S_iX,S_iY,\widehat{H},\beta_t,a_i)$
		\State $t\leftarrow t+a_i$
		\EndFor
		
		\Statex
		\textemdash\textemdash\textemdash\textemdash\textemdash
		\enspace\textsc{Stage II}\enspace
		\textemdash\textemdash\textemdash\textemdash\textemdash
		
		\State $\beta_T\leftarrow
		\text{PCG}(X,Y,\widehat{H},\beta_{T^{\dagger}},
		T-T^{\dagger})$
		\State {\bfseries Output:} $\beta_T$
	\end{algorithmic}
\end{algorithm}

We emphasize that SPCG admits several practical improvements. Its inner least-squares solver, sketching operator, and sequence of sketching dimensions can be adapted to problems with different structures. Consequently, \spcg\ can be improved by directly incorporating any new efficient iterative solver or sketching strategy.

\section{Convergence analysis of SPCG}\label{sec:conv}
This section establishes the convergence and complexity guarantees for \spcg. The proofs of all theorems presented below are provided in Appendix~\ref{app:proofs}. For the $i$th sketched subproblem, let $\beta_j^i$ denote its $j$th PCG iterate. Since each subproblem is warm-started from its predecessor, we set $\beta_0^1=\beta_0$ and $\beta_0^i=\beta_{a_{i-1}}^{i-1}$ for $i=2,\ldots,K$. Let $\mathcal T_i:=\sum_{\ell=1}^i a_\ell$, where $\mathcal T_0=0$, so that $T^\dagger=\mathcal T_K$. Recall that $\delta_i=\E\|X(\tilde\beta^i-\beta)\|$, and let $\delta_{\rm OLS}=\E\|X(\hat\beta-\beta)\|$. The following theorem establishes the convergence for each sketched subproblem.

\begin{theorem}\label{thm:stage}
	Let $N$ and $m_1$ be powers of two, $\epsilon\in(0,1/9)$, and $\delta\in(0,1)$. Suppose that $a_i\geq1$ for $i=1,\ldots,K$. If $r\geq c\epsilon^{-2}[d+\log(N/\delta)]\log(ed/\delta)$ and $m_1>r$, where $c>0$ is a constant, then, with probability at least $1-2\delta$, for any $i=1,\ldots,K$,
	\begin{equation}\label{eq:stage}
		\E\|X(\beta_{a_i}^{i}-\beta)\|
		\leq\theta^{a_i}\E\|X(\beta_0^{i}-\beta)\|+(1+\theta^{a_i})\delta_i,
		\quad \text{where $\theta=\frac14$}.
	\end{equation}
\end{theorem}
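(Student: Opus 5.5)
The argument is deterministic, conditional on a single good event for the sketches. I would then finish with the triangle inequality in the $X$-seminorm and take expectations over $\zeta$. On the event of \cref{thm:srht}, $\widehat S$ is a $(1+\epsilon)$-subspace embedding for $\range(X)$, and (by a second application of the same lemma, since $m_i\ge m_1>r$ and the SRHT row sets are uniform) each $S_i$ is a $(1+\epsilon)$-subspace embedding for $\range(X)$ as well. Here the nesting $\mathcal I_1\subset\cdots\subset\mathcal I_K$ helps. All $S_i$ share one $HDP$ transform, so I would bound failure for $S_1$ and note that adding rows only improves the embedding. This needs a remark, because the scaling $\sqrt{N/m_i}$ changes. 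The cleanest route is to observe that the lemma's failure event is controlled by the row coherence of $HDPU$, which is common to all $i$. Together these give two failure events of probability at most $\delta$ each, hence probability at least $1-2\delta$ overall.

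On this event, write $A_i=S_iX$ and $H_i=A_i^\top A_i$. Both $H_i$ and $\widehat H$ lie within factors $[1-\epsilon,1+\epsilon]$ of $X^\top X$ in the Loewner order. Hence the preconditioned operator $\widehat H^{-1/2}H_i\widehat H^{-1/2}$ has spectrum in $[\frac{1-\epsilon}{1+\epsilon},\frac{1+\epsilon}{1-\epsilon}]$, so its condition number satisfies $\kappa\le\bigl(\frac{1+\epsilon}{1-\epsilon}\bigr)^2$. The standard PCG bound in the $H_i$-norm then gives
\[
\|A_i(\beta^i_{a_i}-\tilde\beta^i)\|\le 2\Bigl(\tfrac{\sqrt\kappa-1}{\sqrt\kappa+1}\Bigr)^{a_i}\|A_i(\beta^i_0-\tilde\beta^i)\|,
\qquad \tfrac{\sqrt\kappa-1}{\sqrt\kappa+1}=\epsilon.
\]
Converting between the $H_i$-norm and the $X^\top X$-norm costs a factor $\sqrt{(1+\epsilon)/(1-\epsilon)}$. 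The step I expect to be delicate is absorbing the constants into $\theta^{a_i}$ with $\theta=1/4$. With $\epsilon<1/9$, one has $\epsilon\le 1/9$, and the factor $2\sqrt{(1+\epsilon)/(1-\epsilon)}=2\sqrt{10/8}<2.25$ must be absorbed. Since $a_i\ge1$, it suffices that $2.25\,\epsilon^{a_i}\le(1/4)^{a_i}$, which holds because $2.25/9=1/4$. If the Chebyshev factor $2$ instead spoils this, I would fall back on the steepest-descent-type bound $\bigl(\frac{\kappa-1}{\kappa+1}\bigr)^{a_i}$, which carries no prefactor and is of order $2\epsilon$. In either case the target is
\[
\|X(\beta^i_{a_i}-\tilde\beta^i)\|\le\theta^{a_i}\|X(\beta^i_0-\tilde\beta^i)\|.
\]

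Finally, apply the triangle inequality twice, giving $\|X(\beta^i_{a_i}-\beta)\|\le\|X(\beta^i_{a_i}-\tilde\beta^i)\|+\|X(\tilde\beta^i-\beta)\|$ and $\|X(\beta^i_0-\tilde\beta^i)\|\le\|X(\beta^i_0-\beta)\|+\|X(\tilde\beta^i-\beta)\|$. Combining these with the previous display and taking $\E$ over $\zeta$ yields \eqref{eq:stage}, since $\E\|X(\tilde\beta^i-\beta)\|=\delta_i$. The sketches are independent of $\zeta$, and the contraction constant is deterministic on the event, so the expectation passes through. The warm start $\beta^i_0$ depends on $\zeta$, but this causes no issue because the bound holds pointwise before the expectation is taken.
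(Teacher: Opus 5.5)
Everything you do conditional on the good event matches the paper's proof step for step. This includes the Loewner sandwich, the preconditioned spectrum in $[\frac{1-\epsilon}{1+\epsilon},\frac{1+\epsilon}{1-\epsilon}]$, the CG rate $q\le\epsilon$, and the transfer factor $\sqrt{(1+\epsilon)/(1-\epsilon)}$. Your absorption step ($2.25/9=1/4$) is the paper's $\sqrt5/9<1/4$, and the two triangle inequalities before taking $\E$ over $\zeta$ are the same. The steepest-descent fallback is not needed.

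The gap is in certifying all $K$ embeddings at once with failure probability $\delta$. Neither justification you offer holds.

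\emph{Adding rows does not monotonically improve an embedding.} With nested sets and $m_{i+1}=2m_i$, you have $U^\top S_{i+1}^\top S_{i+1}U=\tfrac12 U^\top S_i^\top S_iU+\tfrac12 W$. Here $W=(N/m_i)\sum_{j\in\mathcal I_{i+1}\setminus\mathcal I_i}z_jz_j^\top$, where $z_j^\top$ is the $j$th row of $HDPU$. So the deviation of $S_{i+1}$ from $I_d$ is at most the average of the deviations of $S_i$ and $W$. A bad draw of the new rows can therefore make $S_{i+1}$ fail while $S_i$ succeeds; the good events for different $i$ are not nested.

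\emph{The SRHT failure event is not a function of the coherence of $HDPU$ alone.} The argument behind Lemma~\ref{thm:srht} has two failure sources. One is the row-norm flattening by $HD$, which is common to all $i$. The other is the uniform row sampling, a matrix-Chernoff event that depends on $\mathcal I_i$. As written, your argument certifies only $S_1$.

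The paper closes this with a union bound. Each $S_i$ is marginally an SRHT with $m_i>r$, so apply Lemma~\ref{thm:srht} to each with failure probability $\delta/K$. This requires enlarging $c$ to absorb the $\log K$ terms, which is legitimate because $K$ is fixed. Boole's inequality then gives $\Pr\{\mathcal E_\epsilon^c\}\le\delta$.

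If you want to keep your coherence idea and obtain a $K$-independent constant, condition on the common coherence event and union-bound only the $K$ sampling events. Their failure probabilities decay exponentially in $m_i=2^{i-1}m_1$, so the sum is dominated by the $i=1$ term. That route, however, requires opening up the proof of Lemma~\ref{thm:srht} rather than using it as a black box.
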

Theorem~\ref{thm:stage} shows that each sketched subproblem converges exponentially to its intrinsic prediction-error level $\delta_i$. Building on this stagewise result, the following main theorem establishes the global convergence of \spcg.

\begin{theorem}[Global convergence]\label{thm:global}
	Let \(s=T-T^\dagger\) and $\theta=1/4$. Under the assumptions of
	Theorem~\ref{thm:stage}, suppose that \(a_i\geq1\) for
	\(i=1,\ldots,K\). Then, for any \(\beta_0 \in \reals^{d}\), it holds
	with probability at least \(1-2\delta\) that
	\begin{equation}\label{eq:global-exact}
		\E\|X(\beta_T-\beta)\|
		\leq\theta^T\E\|X(\beta_0-\beta)\|
		+\sum_{i=1}^K(1+\theta^{a_i})\theta^{T-\mathcal T_i}\delta_i
		+(1+\theta^s)\delta_{\rm OLS}.
	\end{equation}
	Furthermore, if $\delta_i\leq C\delta_{\rm OLS}$ for $i=1,\ldots,K$, where $C>0$ is a constant independent of \(i\), then
	\begin{equation}\label{eq:global-ols}
		\E\|X(\beta_T-\beta)\|
		\leq\theta^T\E\|X(\beta_0-\beta)\|
		+\left[1+\theta^s\left(1+\frac{5}{3}C\right)\right]\delta_{\rm OLS}.
	\end{equation}
\end{theorem}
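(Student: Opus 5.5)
We prove \eqref{eq:global-exact} by chaining the stagewise bound of Theorem~\ref{thm:stage} across the $K$ sketched subproblems and then applying the same type of bound once more to the full problem in Stage~II.

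\textbf{Stage II bound.} The full problem can be viewed as a ``subproblem'' with $S=I_N$, for which the intrinsic level is exactly $\delta_{\rm OLS}$. The argument behind Theorem~\ref{thm:stage} uses only the preconditioner quality of $\widehat H$ relative to the data matrix, together with the triangle inequality around the exact minimizer. It should therefore apply verbatim, on the same event of probability at least $1-2\delta$, since $\widehat H$ is shared and is a subspace embedding for $\range(X)$. This gives
\begin{equation*}
\E\|X(\beta_T-\beta)\|\le\theta^{s}\E\|X(\beta_{T^\dagger}-\beta)\|+(1+\theta^{s})\delta_{\rm OLS}.
\end{equation*}
One care point: Theorem~\ref{thm:stage} assumes at least one iteration, whereas here $s$ may be $0$. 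When $s=0$ the bound is trivial, because $1+\theta^0\ge1$ and $\beta_T=\beta_{T^\dagger}$. It is also essential that the event is the same for all $K+1$ stages, so no union bound inflates the probability. The event should depend only on $\widehat S$, and on the $S_i$ only through the fact that $m_i\ge m_1>r$; I will check that the proof of Theorem~\ref{thm:stage} supports this.

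\textbf{Stage I recursion.} Write $e_i=\E\|X(\beta^i_{a_i}-\beta)\|$ and $e_0=\E\|X(\beta_0-\beta)\|$. Since $\beta_0^i=\beta_{a_{i-1}}^{i-1}$, Theorem~\ref{thm:stage} gives
\begin{equation*}
e_i\le\theta^{a_i}e_{i-1}+(1+\theta^{a_i})\delta_i.
\end{equation*}
Unrolling by induction on $i$ yields
\begin{equation*}
e_K\le\theta^{\mathcal T_K}e_0+\sum_{i=1}^K(1+\theta^{a_i})\theta^{\mathcal T_K-\mathcal T_i}\delta_i .
\end{equation*}
Substituting this into the Stage~II bound and using $T=\mathcal T_K+s$ produces exactly \eqref{eq:global-exact}.

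\textbf{Simplified bound \eqref{eq:global-ols}.} Using $\delta_i\le C\delta_{\rm OLS}$ and $a_i\ge1$, so that $1+\theta^{a_i}\le 5/4$, the middle sum is at most
\begin{equation*}
\tfrac54 C\delta_{\rm OLS}\,\theta^{s}\sum_{i=1}^K\theta^{\mathcal T_K-\mathcal T_i}.
\end{equation*}
Because the $a_i\ge1$, the exponents $\mathcal T_K-\mathcal T_i$ are distinct nonnegative integers. The sum is therefore bounded by the geometric series $\sum_{j\ge0}\theta^j=4/3$, and the middle term is at most $\tfrac53 C\theta^s\delta_{\rm OLS}$. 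Adding $(1+\theta^s)\delta_{\rm OLS}$ gives \eqref{eq:global-ols}.

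The main obstacle is justifying the Stage~II analogue with the \emph{same} constant $\theta=1/4$ and on the \emph{same} probability event. This requires checking that the proof of Theorem~\ref{thm:stage} does not use anything specific to $m_i<N$, such as the SRHT structure of $S_i$; if it does, the $S=I_N$ case must be redone separately. The recursion and the geometric sum are routine.
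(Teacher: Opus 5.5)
Your proposal is correct and matches the paper's proof essentially step by step. The paper proves the Stage~II contraction $\E\|X(\beta_T-\hat\beta)\|\leq\theta^{s}\E\|X(\beta_{T^\dagger}-\hat\beta)\|$ as a separate lemma (CG convergence plus the preconditioner bound on the full Hessian, with $s=0$ trivial), applies the triangle inequality around $\hat\beta$, unrolls the same Stage~I recursion, and bounds the sum by a geometric series using $\mathcal T_K-\mathcal T_i\geq K-i$.

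The obstacle you flagged resolves as you expected: with $S=I_N$ the embedding is exact, so Stage~II needs only $\widehat{\mathcal E}_\epsilon$, which contains the event $\mathcal E_\epsilon\cap\widehat{\mathcal E}_\epsilon$ of Theorem~\ref{thm:stage}. One correction: that event does depend on each $S_i$ being a subspace embedding, not merely on $m_i>r$.
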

Theorem~\ref{thm:global} provides the main convergence guarantee for \spcg. As the number $s$ of full-data iterations increases, the first term of \eqref{eq:global-ols} converges to zero, while the second term converges to $\delta_{\rm OLS}$. The condition $\delta_i\leq C\delta_{\rm OLS}$ is used only for the simplified bound \eqref{eq:global-ols}. For a fixed finite sequence of sketched subproblems with $\delta_{\rm OLS}>0$, one may take $C=\max_{1\leq i\leq K}\delta_i/\delta_{\rm OLS}$. We next determine the stagewise iteration numbers $a_i$. For $\omega\in(0,1)$, we require the iterative solution at stage \(i\) to satisfy the stagewise accuracy condition $\E\|X(\beta_{a_i}^i-\tilde\beta^i)\|\leq\omega\delta_i$, which ensures $\E\|X(\beta_{a_i}^i-\beta)\|\leq(1+\omega)\delta_i$.

\begin{theorem}\label{thm:iteration-bounds}
	For $i=2,\ldots,K$, let $r_{i-1,i}:=\sqrt{(m_i-d)/(m_{i-1}-d)}$ and suppose the $(i-1)$th subproblem satisfies the stagewise accuracy condition. Then the $i$th subproblem satisfies the same condition if
	\[
	a_i\geq\frac{\ln\{[(1+\omega)r_{i-1,i}+1]/\omega\}}{\ln 4}>0.
	\]
\end{theorem}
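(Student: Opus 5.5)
We argue by applying the PCG contraction for the $i$th sketched subproblem directly to the error relative to the sketched estimator $\tilde\beta^i$, rather than relative to $\beta$. The ingredient from the proof of Theorem~\ref{thm:stage} that we reuse is that, on the event of probability at least $1-2\delta$, the PCG iterates on $(S_iX,S_iY)$ preconditioned by $\widehat H$ satisfy
\[
\|X(\beta_{a_i}^i-\tilde\beta^i)\|\leq\theta^{a_i}\|X(\beta_0^i-\tilde\beta^i)\|,\qquad \theta=\tfrac14 .
\]
This holds pathwise in the noise, hence also in expectation. In fact \eqref{eq:stage} follows from this bound by two triangle inequalities, so we take it as the stagewise contraction. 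The target is therefore
\[
\theta^{a_i}\,\E\|X(\beta_0^i-\tilde\beta^i)\|\leq\omega\delta_i ,
\]
which is the stagewise accuracy condition for stage $i$.

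Next we bound the warm-start error. Since $\beta_0^i=\beta_{a_{i-1}}^{i-1}$, the triangle inequality gives
\[
\E\|X(\beta_0^i-\tilde\beta^i)\|\leq\E\|X(\beta_{a_{i-1}}^{i-1}-\beta)\|+\E\|X(\tilde\beta^i-\beta)\|\leq(1+\omega)\delta_{i-1}+\delta_i .
\]
The first term uses the hypothesis on stage $i-1$ together with the remark preceding the theorem that the accuracy condition implies $\E\|X(\beta^{i-1}_{a_{i-1}}-\beta)\|\leq(1+\omega)\delta_{i-1}$. Dividing by $\delta_i$, the condition for stage $i$ holds as soon as
\[
\theta^{a_i}\bigl[(1+\omega)\delta_{i-1}/\delta_i+1\bigr]\leq\omega ,
\]
that is, as soon as $a_i\geq\ln\{[(1+\omega)\delta_{i-1}/\delta_i+1]/\omega\}/\ln4$.

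The remaining step, and the main obstacle, is to identify $\delta_{i-1}/\delta_i$ with $r_{i-1,i}=\sqrt{(m_i-d)/(m_{i-1}-d)}$. For this we would invoke the asymptotic prediction error of sketch-and-solve estimators under SRHT, as in \cite{dobriban2019asymptotics}. I expect $\delta_i$ to be proportional to $\sigma\sqrt{d}\,\sqrt{(N-d)/(m_i-d)}$ up to a common factor, obtained from $\E\|X(\tilde\beta^i-\beta)\|^2$ in the orthogonal/Hadamard sketch regime. In the ratio every factor except $(m_i-d)^{-1/2}$ cancels, which leaves exactly $r_{i-1,i}$.

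This identification is the delicate point, for two reasons. First, the paper defines $\delta_i$ through $\E\|\cdot\|$ rather than its square, so passing from the squared-error formula to $\delta_i$ needs a concentration argument or Jensen's inequality. Second, the formula is asymptotic in $N,m_i,d$. I would state this as the modelling step, treating $\delta_{i-1}/\delta_i=r_{i-1,i}$ as the ratio given by that formula, and then conclude. Positivity of the bound is immediate, since $[(1+\omega)r_{i-1,i}+1]/\omega>1$ whenever $\omega<1$ and $r_{i-1,i}>1$.
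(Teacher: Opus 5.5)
Your skeleton is the paper's proof. You take expectations in the pathwise contraction $\|X(\beta_{a_i}^i-\tilde\beta^i)\|\leq\theta^{a_i}\|X(\beta_0^i-\tilde\beta^i)\|$ and bound the warm-start error by $(1+\omega)\delta_{i-1}+\delta_i$. You route the triangle inequality through $\beta$, the paper through $\tilde\beta^{i-1}$, with the same result. Then you insert an interstage bound and take logarithms. Everything hinges on that interstage step, which you postulate rather than derive, and two things in your treatment of it should be corrected.

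\emph{The inequality suffices.} You only need $\delta_{i-1}\leq r_{i-1,i}\delta_i$, not the equality $\delta_{i-1}/\delta_i=r_{i-1,i}$. The threshold for $a_i$ is increasing in this ratio, so only an upper bound is used.

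\emph{Jensen goes the wrong way.} Write $Z_i=\|X(\tilde\beta^i-\beta)\|$, so $\delta_i=\E Z_i$. Jensen gives $\delta_{i-1}\leq(\E Z_{i-1}^2)^{1/2}$, which controls the numerator. For the denominator you would need a lower bound $\delta_i\geq c\,(\E Z_i^2)^{1/2}$, which is the opposite direction, so Jensen cannot move you from second moments to $\delta_i$.

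\emph{How the paper closes the step.} It uses two explicit conditions stated just before its proof:
\begin{itemize}
\item Monotonicity of $g(\gamma,\xi)=(1-\gamma)/(\xi-\gamma)$, together with $\xi_i>m_i/N>d/N>\gamma$, turns the Dobriban--Liu limit into the finite-sample bound $\E Z_{i-1}^2\leq r_{i-1,i}^2\E Z_i^2$. Here it is applied with $\tilde\beta^i$ as the reference estimator.
\item The variance comparison $\operatorname{Var}(Z_{i-1})\geq r_{i-1,i}^2\operatorname{Var}(Z_i)$ then gives $\delta_{i-1}^2=\E Z_{i-1}^2-\operatorname{Var}(Z_{i-1})\leq r_{i-1,i}^2\delta_i^2$ exactly.
\end{itemize}
A concentration argument could replace the variance condition. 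It would only give the bound up to a $1+o(1)$ factor, which you would have to absorb into the threshold for $a_i$.

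The theorem as stated cannot be proved without some hypothesis of this kind; the paper itself adds these two. Your modelling step therefore has the same logical status as the paper's. State it as the explicit hypothesis $\delta_{i-1}\leq r_{i-1,i}\delta_i$, or adopt the paper's two conditions, and drop the appeal to Jensen.
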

Under the schedule $m_i=2m_{i-1}$, we have $r_{i-1,i}^2=2+d/(m_{i-1}-d)$, which decreases with $i$. Since the sufficient lower bound for $a_i$ is increasing in $r_{i-1,i}$, the unrounded bounds decrease from $a_2$ to $a_K$. Based on these iteration bounds, the following theorem characterizes the computational cost of \spcg.

\begin{theorem}[Computational complexity]\label{thm:complexity}
	Assume the conditions of Theorem~\ref{thm:iteration-bounds}, $m_{i+1}/m_i=2$, $m_K=N/2$, and $K=O(1)$. Let $a_i$ attain their respective lower bounds, and define $q_1:=\E\|X(\beta_0-\tilde\beta^1)\|/\delta_1$, $\bar r:=r_{1,2}$, and $\alpha:=\ln\{[1+(1+\omega)\bar r]/\omega\}/\ln 4$. If $1<q_1<1+(1+\omega)\bar r$, then $\ln(1/\omega)/\ln 4<a_i\leq\alpha$ for $i=1,\ldots,K$. If the iteration terminates when the iterate attains the noise level $\sigma$, the dominant costs of the three stages of \spcg\ are
	\begin{itemize}
		\item initialization: $Nd\log_2N$;
		\item first stage: $4\alpha Nd$;
		\item second stage:
		$4[\ln(\omega^K/\sigma)/\ln 4]Nd$,
		where $\omega>\sigma^{1/K}$.
	\end{itemize}
\end{theorem}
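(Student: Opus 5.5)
The proof splits into two parts: first the bounds on the stagewise iteration counts, then an accounting of FLOPs in each stage. Both parts reduce to the per-stage contraction of Theorem~\ref{thm:stage} in the form used in Theorem~\ref{thm:iteration-bounds}, namely
\[
\E\|X(\beta^i_{a}-\tilde\beta^i)\|\le\theta^{a}\,\E\|X(\beta^i_0-\tilde\beta^i)\|,\qquad \theta=\tfrac14,
\]
so that stage $i$ meets the accuracy condition as soon as $\theta^{a_i}q_i\le\omega$. Here $q_i:=\E\|X(\beta_0^i-\tilde\beta^i)\|/\delta_i$, and the minimal choice is $a_i=\ln(q_i/\omega)/\ln4$.

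\textbf{Iteration bounds.} For $i=1$ the hypothesis $1<q_1<1+(1+\omega)\bar r$ gives directly $\ln(1/\omega)/\ln4<a_1<\alpha$. For $i\ge2$, the warm start is $\beta_0^i=\beta^{i-1}_{a_{i-1}}$. By the triangle inequality through $\beta$,
\[
q_i\le\frac{(1+\omega)\delta_{i-1}+\delta_i}{\delta_i}=(1+\omega)\frac{\delta_{i-1}}{\delta_i}+1 .
\]
To bound the ratio $\delta_{i-1}/\delta_i$ by $r_{i-1,i}$, I would use the SAS prediction-error asymptotics, $\delta_i\propto\sqrt{d/(m_i-d)}$ up to the OLS part, as in the argument behind Theorem~\ref{thm:iteration-bounds}. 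This yields $q_i\le(1+\omega)r_{i-1,i}+1$.

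Next, under $m_i=2m_{i-1}$ we have $r_{i-1,i}^2=2+d/(m_{i-1}-d)$, which is decreasing in $i$. Hence $r_{i-1,i}\le r_{1,2}=\bar r$, and therefore $a_i\le\alpha$. For the lower bound I would argue that $q_i>1$: the warm start cannot be closer to $\tilde\beta^i$ than the level $\delta_i$, since it was only fitted to a coarser subproblem. I expect this to be the delicate step; I would try to justify it via $\delta_{i-1}>\delta_i$ together with a reverse triangle inequality, or else treat it as the same modeling assumption used for $q_1$. Given $q_i>1$, we get $a_i>\ln(1/\omega)/\ln4$.

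\textbf{Cost accounting.} The initialization cost is the single Hadamard transform $HDP$ applied to $X$, which costs $Nd\log_2N$. In Stage I, one PCG iteration on subproblem $i$ costs about $4m_id$ (keeping only the terms linear in $N$, since $d\ll N$). Summing, and using $m_i=N2^{i-K-1}$, gives
\[
\sum_i 4a_im_id\le4\alpha d\sum_i m_i\le 4\alpha d\cdot 2m_K=4\alpha Nd,
\]
and $K=O(1)$ keeps everything else lower order. For Stage II, Theorem~\ref{thm:global} together with the accumulated accuracy shows that the initial Stage II error relative to the target is of order $\omega^{K}$ times a constant. The error contracts by a factor of $4$ per iteration, so reaching the noise level $\sigma$ requires $s\approx\ln(\omega^K/\sigma)/\ln4$ iterations, each costing $4Nd$. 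The condition $\omega>\sigma^{1/K}$ makes this count positive. The heuristic identification of the initial Stage II error with $\omega^K$, up to normalization, is the second place where I expect care to be needed, and I would state it explicitly as the bookkeeping convention.
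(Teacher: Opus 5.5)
Your initialization and Stage~I cost accounting, and your upper bound $a_i\le\alpha$, agree with the paper. However, the lower bound $a_i>\ln(1/\omega)/\ln4$ for $i\ge2$ is left open, and the step you propose for it would fail. The cause is how you read ``let $a_i$ attain their respective lower bounds.'' You take $a_i=\ln(q_i/\omega)/\ln4$ with the actual warm-start ratio $q_i$, which forces you to prove $q_i>1$. The reverse triangle inequality only gives $q_i\ge(1-\omega)\delta_{i-1}/\delta_i-1$. This exceeds $1$ only if $\delta_{i-1}>2\delta_i/(1-\omega)$, but the only interstage relation available is the \emph{upper} bound $\delta_{i-1}\le r_{i-1,i}\delta_i$, with $r_{i-1,i}$ close to $\sqrt2$ under doubling. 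Assuming $q_i>1$ instead would add a hypothesis the theorem does not have. In the paper, for $i\ge2$ the ``respective lower bound'' is the explicit sufficient value of Theorem~\ref{thm:iteration-bounds}, $a_i=\ln\{[(1+\omega)r_{i-1,i}+1]/\omega\}/\ln4$. Then $a_i>\ln(1/\omega)/\ln4$ is immediate from $(1+\omega)r_{i-1,i}+1>1$, and $a_i\le\alpha$ follows from $r_{i-1,i}\le\bar r$. Only $a_1$ is defined through $q_1$, which is why the hypothesis $1<q_1<1+(1+\omega)\bar r$ concerns the first stage alone.

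Your Stage~II count is also not actually derived. The accuracy available at the end of Stage~I (Lemma~\ref{lem:first-stage}, or the stagewise condition giving $\E\|X(\beta_{T^\dagger}-\beta)\|\le(1+\omega)\delta_K$) is of order $\delta_K$, with no factor $\omega^K$. The factors $\omega$ do not compound, because each stage is only driven to within $\omega\delta_i$ of its own $\tilde\beta^i$. In the paper, $\omega^K$ is iteration bookkeeping that rests on exactly the lower bound you left open. The stopping rule is read as $\theta^T\le\sigma$, i.e.\ $T=\ln(1/\sigma)/\ln4$. The bound $a_i>\ln(1/\omega)/\ln4$ means $\theta^{a_i}<\omega$, so $\theta^{T^\dagger}<\omega^K$ and
\[
s=T-T^\dagger<\frac{\ln(1/\sigma)-K\ln(1/\omega)}{\ln4}=\frac{\ln(\omega^K/\sigma)}{\ln4},
\]
with each full-data step costing $4Nd$. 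Once you adopt the paper's definition of $a_i$ for $i\ge2$, both of your ``delicate'' steps close together, and the second-stage estimate becomes a direct consequence of the first part rather than a heuristic about the size of the error.
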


Theorem~\ref{thm:complexity} shows that the overall cost is dominated by the $O(Nd\log_2N)$ SRHT initialization. A sparse operator such as CountSketch can reduce the sketching cost to $O(Nd)$, although achieving comparable theoretical embedding guarantees generally requires a larger sketch size. Nevertheless, the CountSketch variant tested in Section~\ref{sec:experiments} exhibits convergence comparable to the SRHT version while substantially reducing initialization and total runtime, indicating practical performance considerably better than its conservative theoretical guarantees suggest.

\section{Numerical experiments}\label{sec:experiments}
We compare \spcg\ with IDS \cite{wang2022iterative} and full-data PCG \cite{lacotte2021fast}. For each test, we generate a Gaussian matrix $G\in\reals^{N\times d}$ and set the feature matrix $X=GD$, where $D=\operatorname{diag}(\tilde{\sigma}_1,\ldots,\tilde{\sigma}_d)$ is chosen to give the prescribed condition number $\kappa$. The entries of \(\beta\) are independently drawn from the standard Gaussian
distribution, while those of the noise vector \(\zeta\) are independently
drawn from \(\mathcal N(0,10^{-8})\). The response vector is then generated
as \(Y=X\beta+\zeta\).

The IDS step size is set to $\mu=(1-d/r)^2/(1+d/r)$ as recommended in \cite{wang2022iterative}. For IDS and \spcg, the sketch sizes satisfy $m_{i+1}/m_i=2$, with $m_1=8d$ and $m_K=N/2$. All methods use an SRHT-based sketched Hessian of size $r=6d$, and the corresponding preconditioner is applied through a QR factorization.

We measure accuracy by the prediction error $\Delta_t=\|X(\beta_t-\beta)\|^2$ and use the OLS error $\Delta=\|X(\hat\beta-\beta)\|^2\approx\sigma^2d$ as the target accuracy \cite{pilanci2016iterative}. The maximum iteration number is $T=100$, and reported results are averages over 10 independent runs unless stated otherwise.

\subsection{Iteration parameters}
We first determine the number $a_i$ of SPCG for each sketched subproblem. Figure~\ref{fig:parameter} uses $N=2^{20}$, $d=2^6$, and $\kappa=10^4$. The left panel shows the unrounded theoretical lower bounds for $a_2$ and $a_K$ over $\omega\in\{2^{-4},2^{-3},2^{-2},2^{-1},1\}$. The two curves nearly coincide, decrease as $\omega$ increases, and remain below three, indicating that only a few iterations per subproblem are needed under typical accuracy requirements. The right panel sets $a_i=\mathrm{NoI}$ uniformly across all sketched subproblems and reports the total runtime. The runtime is minimized at $\mathrm{NoI}=3$, while $\mathrm{NoI}=2$ and $4$ result in similar runtimes. We therefore set $a_i=3$ in the main experiments.

\begin{figure}[!htbp]
	\centering
	\begin{subfigure}{0.48\linewidth}
		\includegraphics[width=\linewidth]{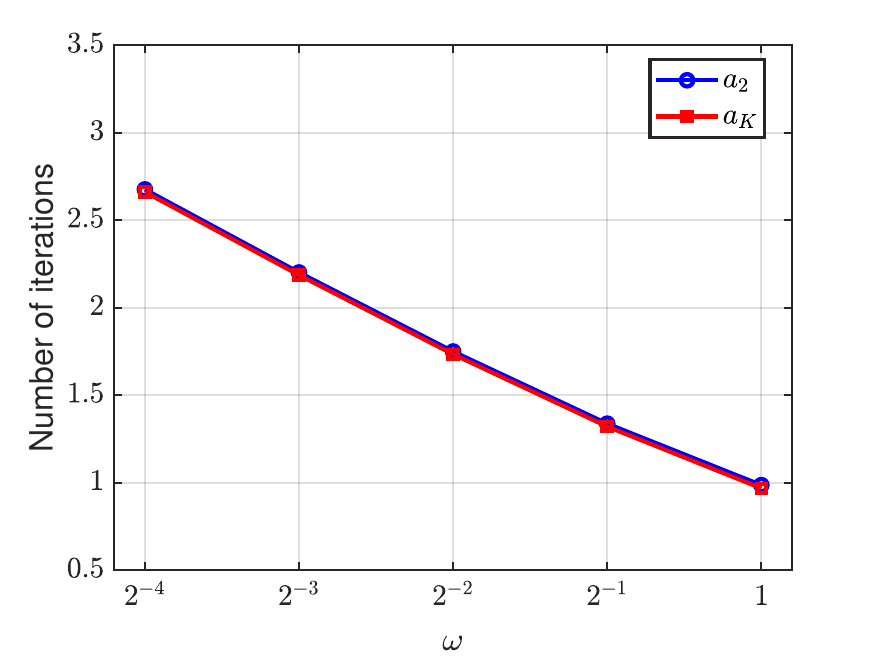}
		\caption{Lower bounds for $a_2$ and $a_K$.}
	\end{subfigure}
	\hfill
	\begin{subfigure}{0.48\linewidth}
		\includegraphics[width=\linewidth]{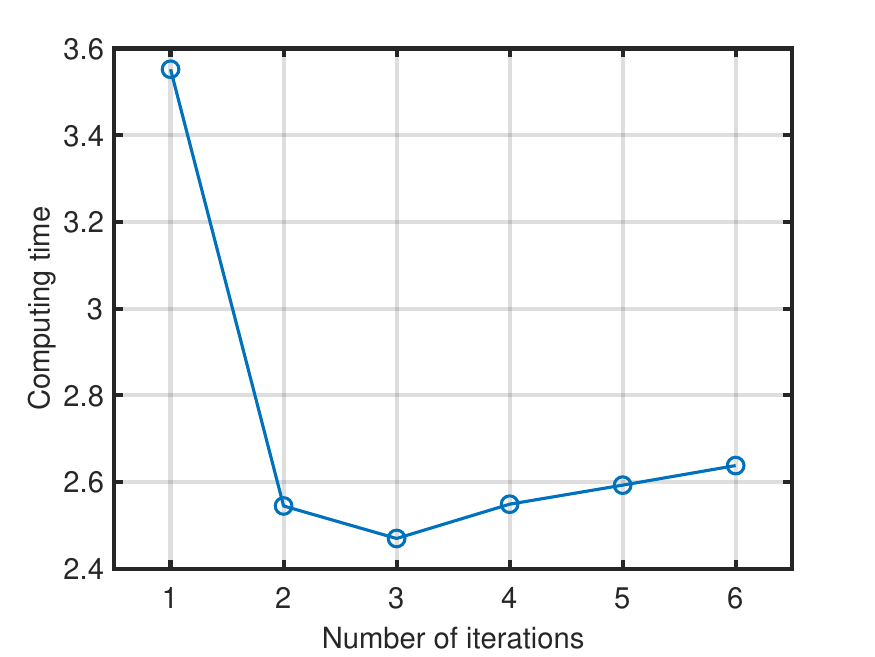}
		\caption{Runtime for $a_i=\mathrm{NoI}$.}
	\end{subfigure}
	\caption{Selection of the inner iteration numbers for \spcg.}
	\label{fig:parameter}
\end{figure}

\FloatBarrier
\subsection{Accuracy and computational performance}
We consider four problems with $N\in\{2^{17},2^{18},2^{19},2^{20}\}$, $d=2^6$, and $\kappa=10^4$, followed by two larger and more ill-conditioned problems with $N\in\{2^{21},2^{22}\}$, $d=2^6$, and $\kappa=10^8$. For $\sigma^2=10^{-8}$, the OLS target for $d=2^6$ is approximately $\sigma^2d=6.4\times10^{-7}$. We also use a two-dimensional problem with $N=2^{20}$ and $\kappa=10^4$ to visualize the randomized convergence paths.

Figure~\ref{fig:accuracy-path}(a) compares the final errors of OLS and \spcg\ for $N=2^{17},\ldots,2^{20}$; the error bars summarize the variation across repeated trials. In every case, \spcg\ reaches the OLS error level. Figure~\ref{fig:accuracy-path}(b) shows 100 independent convergence paths for \spcg\ and IDS on the two-dimensional problem. The \spcg\ paths are more concentrated, indicating that the sequential sketched estimators provide stable intermediate points.

\begin{figure}[!htbp]
	\centering
	\begin{subfigure}{0.48\linewidth}
		\includegraphics[width=\linewidth]{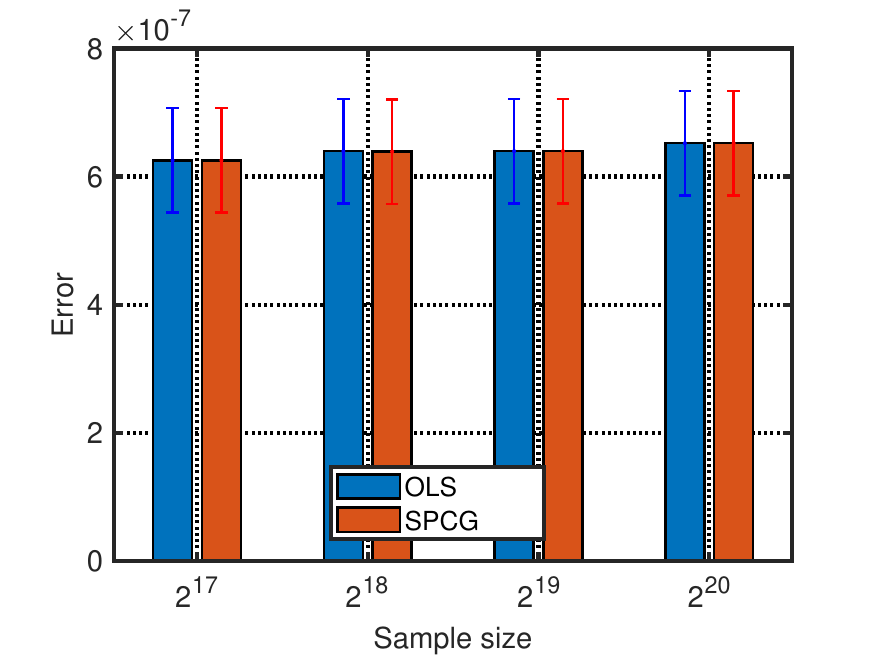}
		\caption{Prediction errors of OLS and \spcg.}
	\end{subfigure}
	\hfill
	\begin{subfigure}{0.48\linewidth}
		\includegraphics[width=\linewidth]{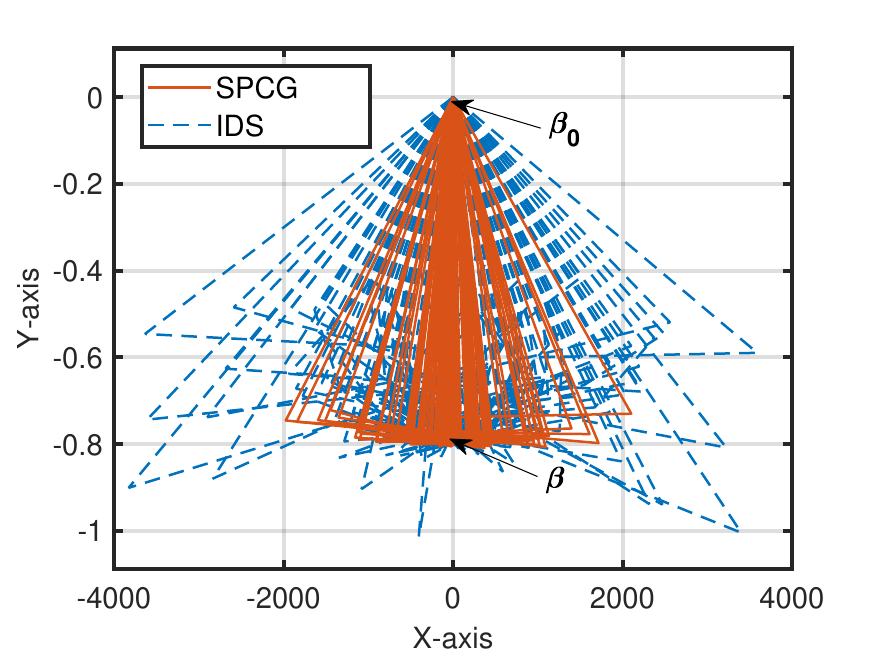}
		\caption{Two-dimensional convergence paths.}
	\end{subfigure}
	\caption{Accuracy and convergence stability of \spcg.}
	\label{fig:accuracy-path}
\end{figure}

Figure~\ref{fig:all_results} collects the prediction-error histories against iteration count and CPU time for all six test settings. For $\kappa=10^4$, \spcg\ reaches the target in substantially fewer iterations; IDS and PCG have similar convergence rates and typically require about $1.5$ times as many iterations. The runtime advantage is because most \spcg\ iterations use smaller sketched subproblems. The same behavior persists for $\kappa=10^8$. As $N$ increases to $2^{21}$ and $2^{22}$, the absolute runtime gap grows because IDS and PCG repeatedly operate on the full data. The comparison with full-data PCG also serves as an ablation of the sequential structure: both methods use PCG, but moving the early iterations to smaller sketched systems substantially reduces the total cost.

Table~\ref{tab:runtime} gives the iteration counts and CPU times required to reach the OLS target. \spcg\ is fastest in every setting. The speedup ratios in Table~\ref{tab:speedup} are defined as the baseline value divided by the corresponding \spcg\ value. Relative to IDS, the iteration and CPU speedups range from $1.38$ to $1.72$ and from $1.41$ to $1.95$, respectively. Relative to PCG, the corresponding ranges are $1.25$--$1.76$ and $2.10$--$2.83$.

\begin{figure}[htbp]
	\centering
	\begin{subfigure}{0.48\textwidth}
		\centering
		\includegraphics[width=0.48\linewidth]{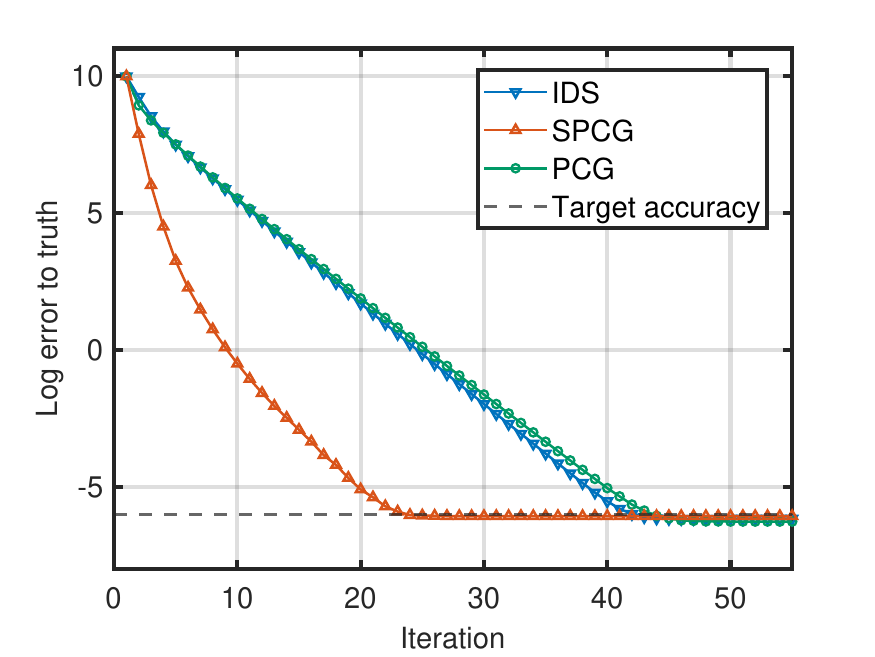}
		\includegraphics[width=0.48\linewidth]{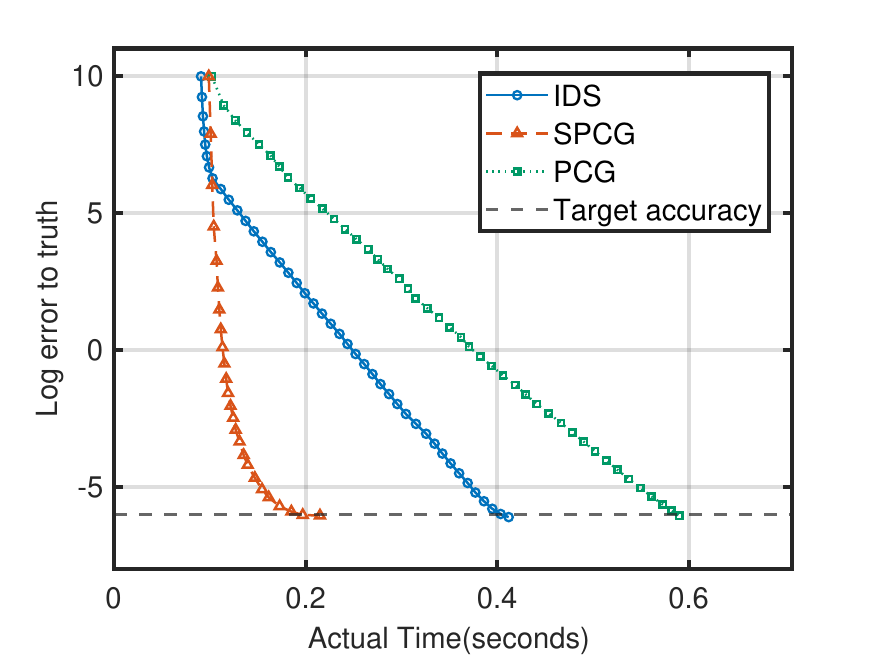}
		\caption{$N=2^{17}, d=2^6, \kappa=10^4$}
		\label{fig:group1}
	\end{subfigure}
	\begin{subfigure}{0.48\textwidth}
		\centering
		\includegraphics[width=0.48\linewidth]{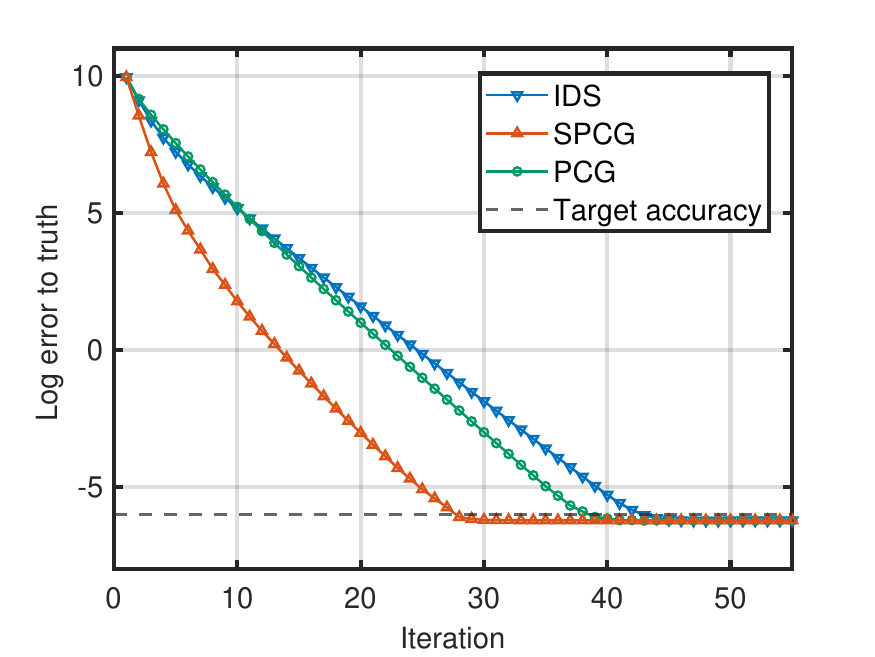}
		\includegraphics[width=0.48\linewidth]{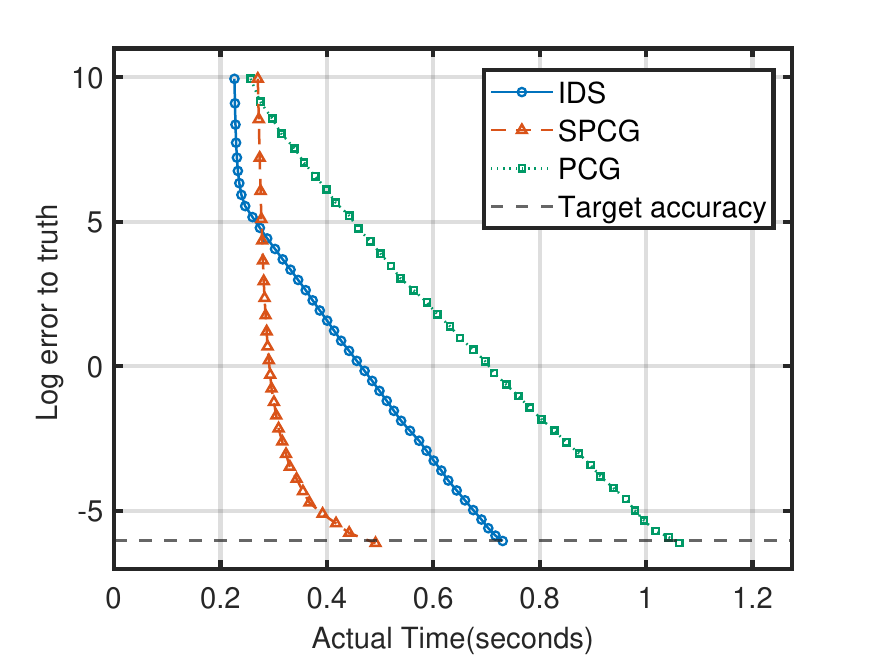}
		\caption{$N=2^{18}, d=2^6, \kappa=10^4$}
		\label{fig:group2}
	\end{subfigure}

	\begin{subfigure}{0.48\textwidth}
		\centering
		\includegraphics[width=0.48\linewidth]{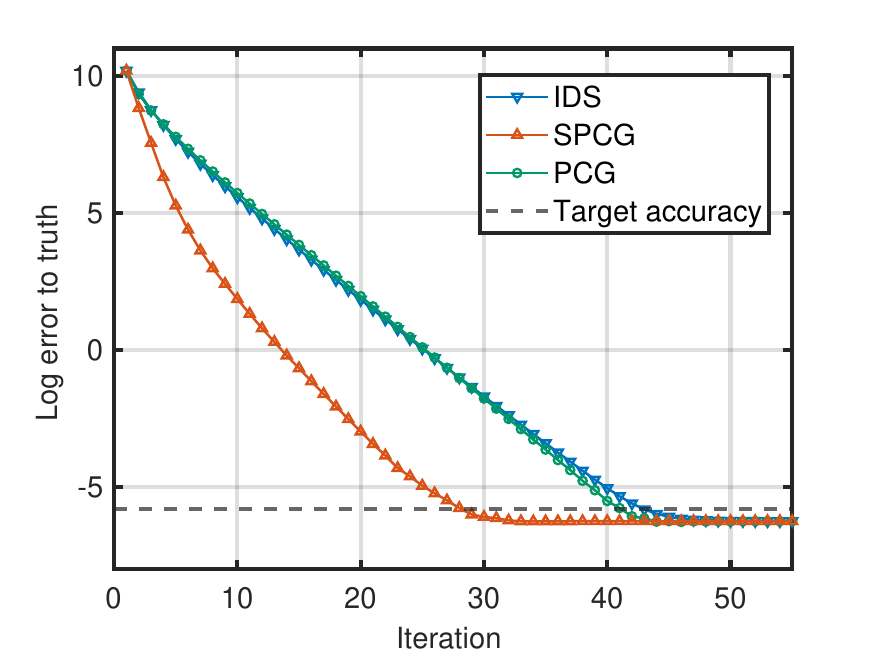}
		\includegraphics[width=0.48\linewidth]{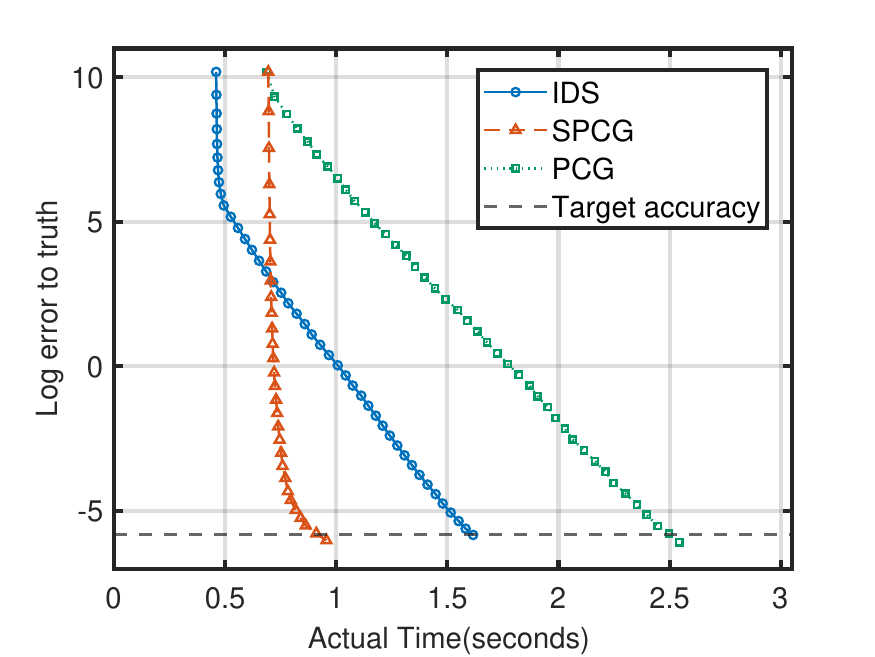}
		\caption{$N=2^{19}, d=2^6, \kappa=10^4$}
		\label{fig:group3}
	\end{subfigure}
	\begin{subfigure}{0.48\textwidth}
		\centering
		\includegraphics[width=0.48\linewidth]{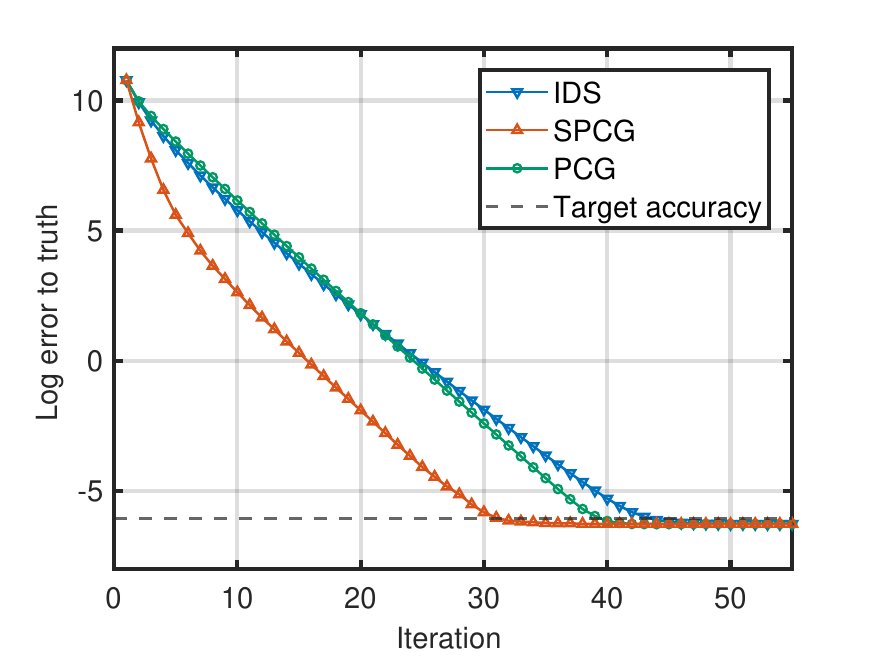}
		\includegraphics[width=0.48\linewidth]{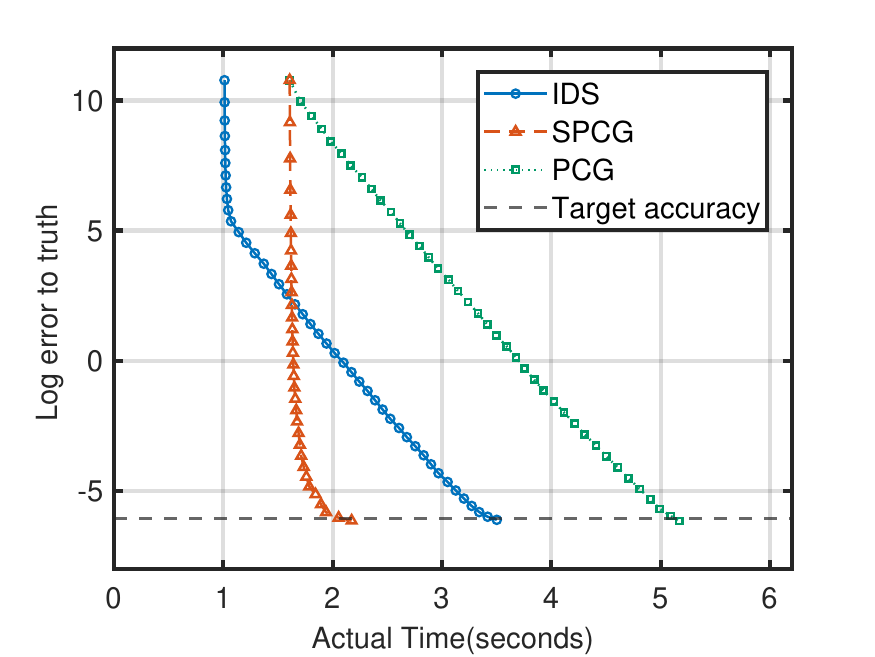}
		\caption{$N=2^{20}, d=2^6, \kappa=10^4$}
		\label{fig:group4}
	\end{subfigure}
	
	\begin{subfigure}{0.48\textwidth}
		\centering
		\includegraphics[width=0.48\linewidth]{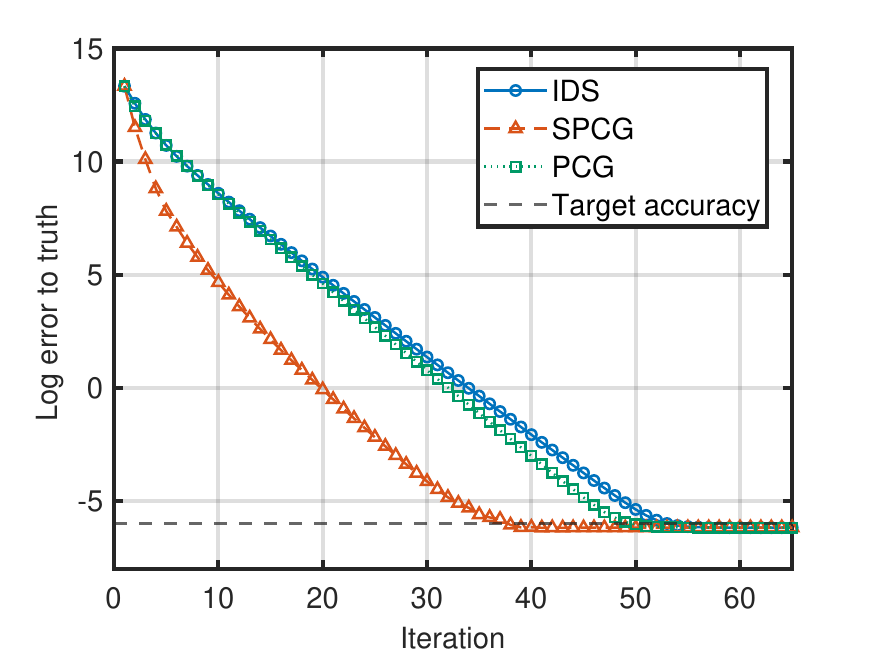}
		\includegraphics[width=0.48\linewidth]{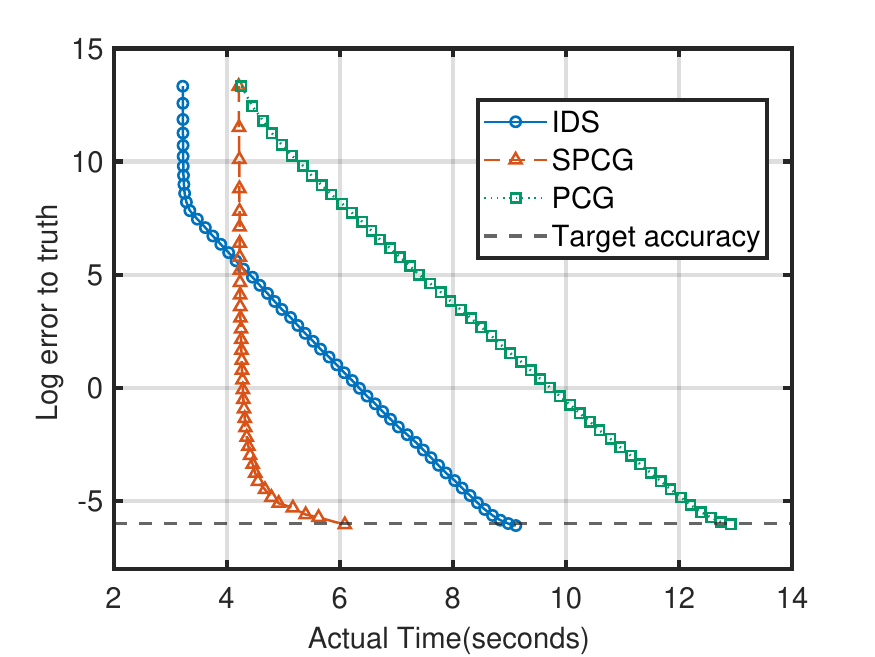}
		\caption{$N=2^{21}, d=2^6, \kappa=10^8$}
		\label{fig:group5}
	\end{subfigure}
	\begin{subfigure}{0.48\textwidth}
		\centering
		\includegraphics[width=0.48\linewidth]{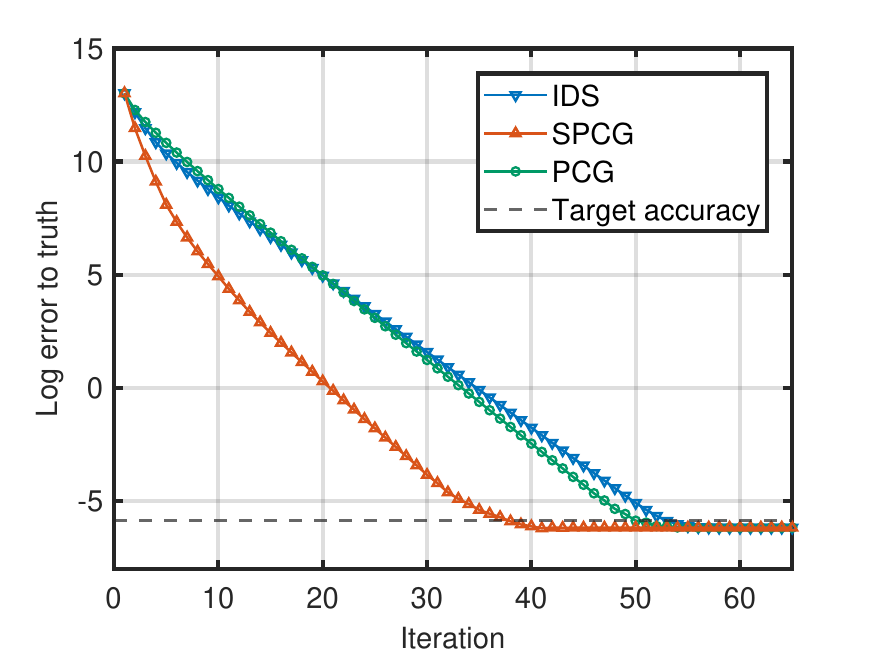}
		\includegraphics[width=0.48\linewidth]{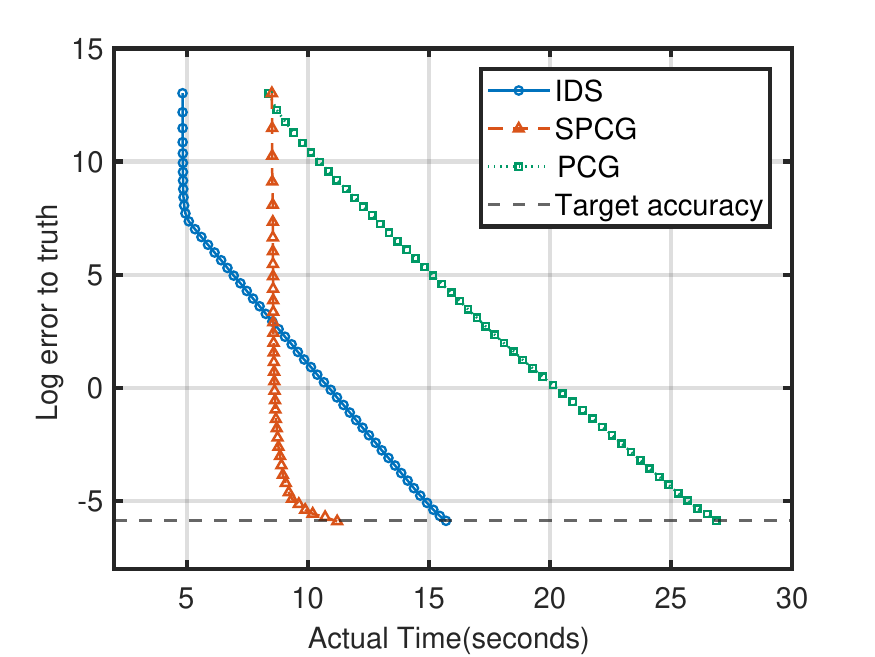}
		\caption{$N=2^{22}, d=2^6, \kappa=10^8$}
		\label{fig:group6}
	\end{subfigure}
	\caption{Prediction error and running time comparisons of different methods.}
	\label{fig:all_results}
\end{figure}

\begin{center}
	\begin{minipage}{\textwidth}
		\captionsetup{type=table}
		\caption{Iterations (IT) and CPU time for reaching the target accuracy, $d=2^6$.}
		\label{tab:runtime}
		\centering
		\resizebox{\textwidth}{!}{
			\begin{tabular}{l cc cc cc cc cc cc}
				\toprule
				& \multicolumn{8}{c}{$\kappa=10^4$} & \multicolumn{4}{c}{$\kappa=10^8$}\\
				\cmidrule(lr){2-9}\cmidrule(lr){10-13}
				Method
				& \multicolumn{2}{c}{$N=2^{17}$}
				& \multicolumn{2}{c}{$N=2^{18}$}
				& \multicolumn{2}{c}{$N=2^{19}$}
				& \multicolumn{2}{c}{$N=2^{20}$}
				& \multicolumn{2}{c}{$N=2^{21}$}
				& \multicolumn{2}{c}{$N=2^{22}$}\\
				\cmidrule(lr){2-3}\cmidrule(lr){4-5}\cmidrule(lr){6-7}
				\cmidrule(lr){8-9}\cmidrule(lr){10-11}\cmidrule(lr){12-13}
				& IT & CPU & IT & CPU & IT & CPU & IT & CPU & IT & CPU & IT & CPU\\
				\midrule
				IDS      & 43 & 4.12E-01 & 43 & 7.31E-01 & 43 & 1.61E+00 & 44 & 3.51E+00 & 54 & 9.15E+00 & 55 & 1.58E+01\\
				PCG      & 44 & 5.96E-01 & 39 & 1.06E+00 & 42 & 2.54E+00 & 40 & 5.16E+00 & 51 & 1.27E+01 & 51 & 2.68E+01\\
				\spcg & 25 & 2.11E-01 & 28 & 4.91E-01 & 29 & 9.55E-01 & 32 & 2.17E+00 & 38 & 6.04E+00 & 39 & 1.12E+01\\
				\bottomrule
		\end{tabular}}
	\end{minipage}
\end{center}

\begin{center}
	\begin{minipage}{\textwidth}
		\captionsetup{type=table}
		\caption{Speedup ratios of \spcg\ over IDS and PCG, $d=2^6$.}
		\label{tab:speedup}
		\centering
		\begin{tabular}{c c c c c c}
			\toprule
			$\kappa$ & $N$ & $\mathrm{SU_I(IT)}$ & $\mathrm{SU_I(CPU)}$
			& $\mathrm{SU_P(IT)}$ & $\mathrm{SU_P(CPU)}$\\
			\midrule
			\multirow{4}{*}{$10^4$}
			& $2^{17}$ & 1.72 & 1.95 & 1.76 & 2.83\\
			& $2^{18}$ & 1.54 & 1.49 & 1.39 & 2.16\\
			& $2^{19}$ & 1.48 & 1.69 & 1.45 & 2.66\\
			& $2^{20}$ & 1.38 & 1.62 & 1.25 & 2.38\\
			\midrule
			\multirow{2}{*}{$10^8$}
			& $2^{21}$ & 1.42 & 1.52 & 1.34 & 2.10\\
			& $2^{22}$ & 1.41 & 1.41 & 1.31 & 2.39\\
			\bottomrule
		\end{tabular}
	\end{minipage}
\end{center}

\FloatBarrier
\subsection{Optimization strategies}
We finally examine two practical refinements on the challenging problem with $N=2^{22}$, $d=2^6$, and $\kappa=10^8$. The first adjusts the sketch-size schedule. Under the doubling rule, the last two sketch sizes reach $2^{20}$ and $2^{21}$, substantially reducing the cost advantage of these subproblems. The tuned variant instead uses $m_i\in\{2^k\}_{k=10}^{16}$, followed by a single subproblem of size $2^{20}$ as a transition to the full problem, and sets $a_i=4$. Figure~\ref{fig:tuning} shows that this variant retains the convergence behavior of \spcg\ while reducing its runtime. The schedule is selected empirically to illustrate the potential benefit of adapting the sketch sizes. Automatic selection of the sketch-size schedule remains an open problem.

\begin{figure}[!htbp]
	\centering
	\begin{subfigure}{0.48\linewidth}
		\includegraphics[width=\linewidth]{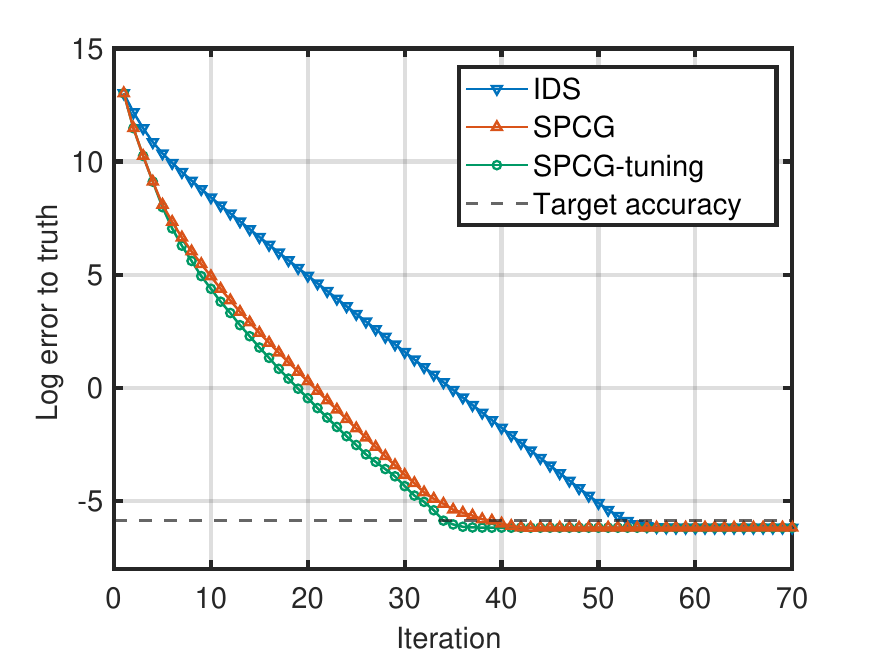}
		\caption{Prediction error.}
	\end{subfigure}
	\hfill
	\begin{subfigure}{0.48\linewidth}
		\includegraphics[width=\linewidth]{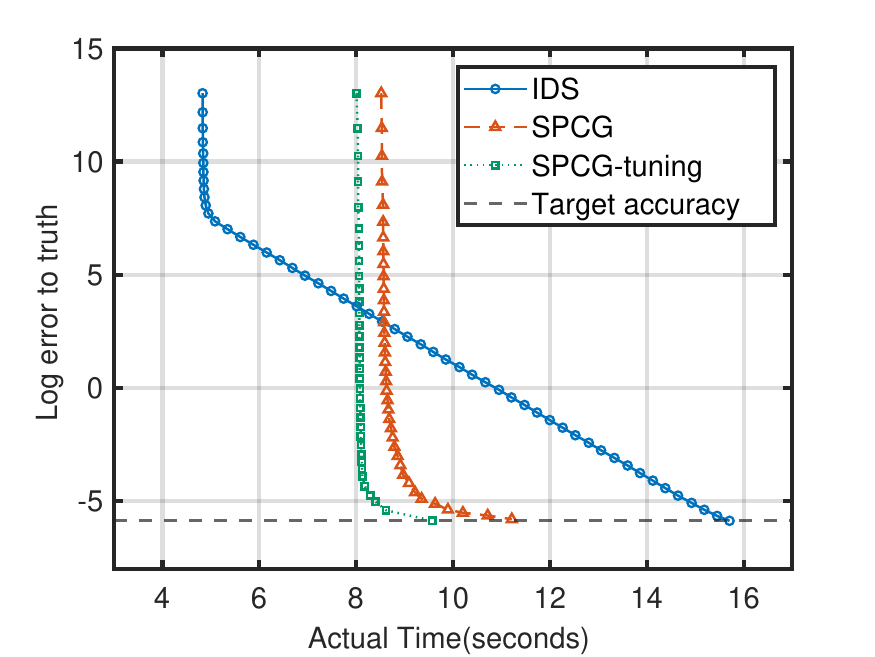}
		\caption{CPU time.}
	\end{subfigure}
	\caption{Comparison of \spcg, its tuned variant, and IDS for $N=2^{22}$, $d=2^6$, and $\kappa=10^8$.}
	\label{fig:tuning}
\end{figure}

The second refinement replaces SRHT sketching with CountSketch in the initialization stage. We denote the two implementations by \spcg-SRHT and \spcg-CS. As discussed following Theorem~\ref{thm:complexity}, CountSketch generally requires a larger sketch size to achieve comparable theoretical embedding guarantees. Nevertheless, Figure~\ref{fig:countsketch} shows that \spcg-CS closely matches the convergence and final accuracy of \spcg-SRHT while substantially reducing the initialization time and total runtime. Thus, CountSketch exhibits practical performance considerably better than its conservative theoretical guarantees suggest.

\begin{figure}[!htbp]
	\centering
	\begin{subfigure}{0.48\linewidth}
		\includegraphics[width=\linewidth]{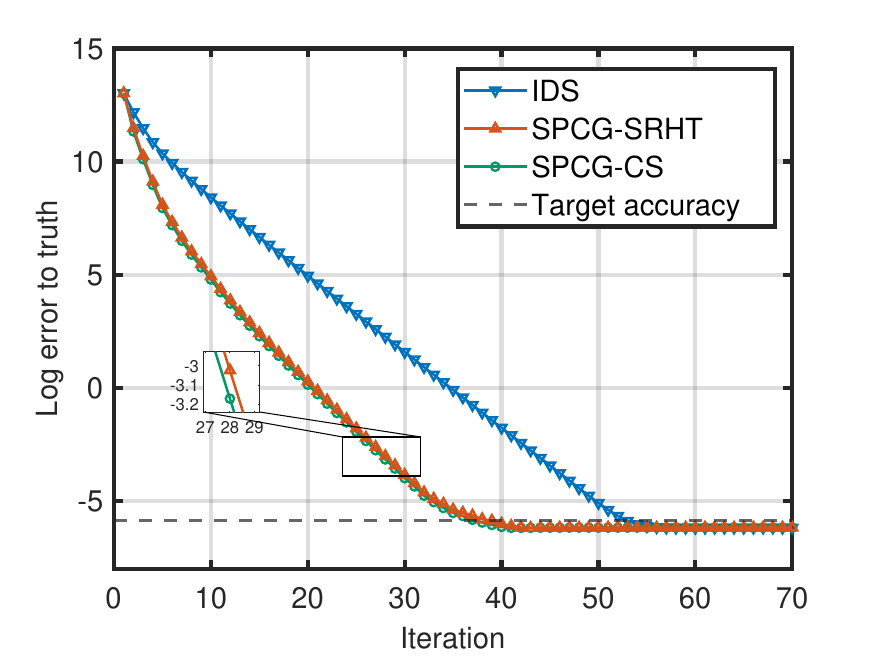}
		\caption{Prediction error.}
	\end{subfigure}
	\hfill
	\begin{subfigure}{0.48\linewidth}
		\includegraphics[width=\linewidth]{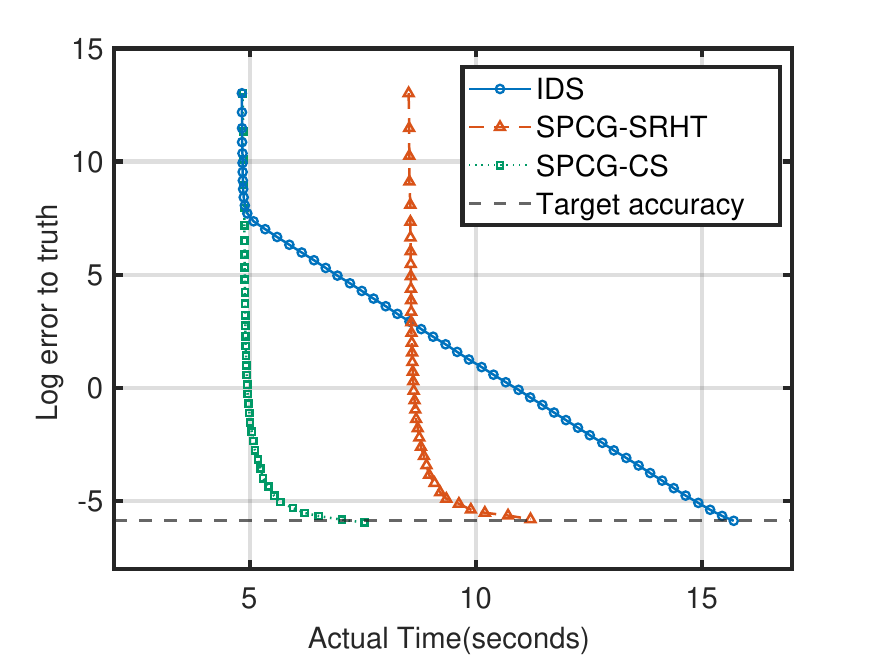}
		\caption{CPU time.}
	\end{subfigure}
	\caption{Comparison of the SRHT and CountSketch variants with IDS for $N=2^{22}$, $d=2^6$, and $\kappa=10^8$.}
	\label{fig:countsketch}
\end{figure}

\section{Conclusions}\label{sec:conclusion}
We proposed SPCG, a two-stage method that sequentially solves increasingly accurate sketched least-squares problems before refining the resulting estimator on the full problem. We established the convergence of \spcg. Numerical experiments demonstrated its computational advantages over IDS and conventional PCG. Exploring more adaptive sketching strategies and efficient iterative solvers based on the framework of \spcg\ remains a valuable direction for future research.

\FloatBarrier
\appendix
\section{Proofs of the main results}\label{app:proofs}

This appendix provides the auxiliary results and complete proofs for Section~\ref{sec:conv}. 

\subsection{Auxiliary lemmas}\label{app:auxiliary}
For an event $\mathcal A$, let $\mathcal A^c$ denote its complement. For $\epsilon\in(0,1)$ and $U\in\reals^{N\times d}$ with orthonormal columns, define
\(
\mathcal E_\epsilon(U):=\bigcap_{i=1}^K\left\{\|U^\top S_i^\top S_iU-I_d\|\leq\epsilon\right\},
\)
\(
\widehat{\mathcal E}_\epsilon(U):=\{\|U^\top\widehat S^\top\widehat SU-I_d\|\leq\epsilon\}.
\)

\begin{lemma}\label{lem:simultaneous-embedding}
	Under the sketch-size conditions of Theorem~\ref{thm:stage}, for any $U\in\reals^{N\times d}$ with orthonormal columns, $\Pr\{\mathcal E_\epsilon(U)^c\}\leq\delta$ and $\Pr\{\widehat{\mathcal E}_\epsilon(U)^c\}\leq\delta$. Consequently, $\Pr\{\mathcal E_\epsilon(U)\cap\widehat{\mathcal E}_\epsilon(U)\}\geq1-2\delta$.
\end{lemma}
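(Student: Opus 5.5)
The two probability bounds follow from Lemma~\ref{thm:srht}, and the joint bound from a union bound over complements.

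For $\widehat{\mathcal E}_\epsilon(U)$, note that $\widehat S\in\reals^{r\times N}$ is an SRHT and that Theorem~\ref{thm:stage} assumes $r\geq c\epsilon^{-2}[d+\log(N/\delta)]\log(ed/\delta)$. Lemma~\ref{thm:srht} then gives $\Pr\{\widehat{\mathcal E}_\epsilon(U)^c\}\leq\delta$ directly.

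For $\mathcal E_\epsilon(U)$, I would not apply a union bound over $i=1,\dots,K$, since that would give $K\delta$. Instead I would use the nested construction of Section~\ref{sec:framework}. Each $S_i=\sqrt{N/m_i}\,(HDP)_{\mathcal I_i,:}$ shares the same random $H D P$, and the index sets are nested with $\mathcal I_1\subset\cdots\subset\mathcal I_K$. Set $W:=HDPU$; this has orthonormal columns. Then
\[
U^\top S_i^\top S_iU=\frac{N}{m_i}\sum_{j\in\mathcal I_i}w_jw_j^\top,
\]
where $w_j^\top$ is the $j$th row of $W$. The natural first step is to use Lemma~\ref{thm:srht} for the smallest sketch $S_1$, which is valid because $m_1>r$ exceeds the threshold. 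The next step is to transfer the bound to the larger nested sketches. Each $\mathcal I_i$ is a uniform sample that contains $\mathcal I_1$. Because of the random permutation, $\mathcal I_i$ can be viewed as the disjoint union of $m_i/m_1$ blocks, each distributed like $\mathcal I_1$. The average of PSD matrices that each lie in $[(1-\epsilon)I_d,(1+\epsilon)I_d]$ also lies in that interval.

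This transfer step is the main obstacle. The blocks are identically distributed, but they are not all the single event already controlled, so the transfer does not follow for free. I see two ways to handle it. The first option is the standard SRHT argument: the guarantee of Lemma~\ref{thm:srht} comes from a single event, namely that the row norms $\|w_j\|^2$ are flattened to at most about $(d+\log(N/\delta))/N$, with probability $\geq1-\delta/2$. Conditional on that flatness event, matrix Chernoff gives the embedding for every sample size above the threshold. So one works on that common event and reads off the constants for all $m_i\ge m_1$ together. The second option, if the paper's constant $c$ is meant to absorb it, is to apply Lemma~\ref{thm:srht} to each $S_i$ with failure probability $\delta/K$; this requires $\log(NK/\delta)$ and $\log(edK/\delta)$ in place of the original logarithms, which $m_i\geq 2^{i-1}m_1$ more than covers since $2^{i-1}\ge$ the extra logarithmic factor. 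I would present the second option as the rigorous route and note that $c$ is adjusted accordingly.

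Finally, a union bound on the complements gives
\[
\Pr\{\mathcal E_\epsilon(U)\cap\widehat{\mathcal E}_\epsilon(U)\}\geq1-\Pr\{\mathcal E_\epsilon(U)^c\}-\Pr\{\widehat{\mathcal E}_\epsilon(U)^c\}\geq1-2\delta.
\]
This holds whether or not $\widehat S$ and the $S_i$ are independent.
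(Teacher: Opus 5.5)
Your second option is the paper's proof. Apply Lemma~\ref{thm:srht} to each $S_i$ with failure probability $\delta/K$, enlarging $c$ for fixed $K$, use Boole's inequality to get $\Pr\{\mathcal E_\epsilon(U)^c\}\leq\delta$, apply the same lemma to $\widehat S$, and finish with one more union bound. The nested-transfer discussion is unnecessary, and the doubling $m_i\geq 2^{i-1}m_1$ buys nothing at $i=1$, so it is the enlargement of $c$, not the growth of $m_i$, that absorbs the extra $\log K$ terms.
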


\begin{proof}
	For fixed $K$, the constant $c$ in Theorem~\ref{thm:stage} can be chosen sufficiently large so that Lemma~\ref{thm:srht}, applied with failure probability $\delta/K$, gives $\Pr\{\|U^\top S_i^\top S_iU-I_d\|>\epsilon\}\leq\delta/K$ for $i=1,\ldots,K$. Boole's inequality \cite{ross2020first} then yields $\Pr\{\mathcal E_\epsilon(U)^c\}\leq\sum_{i=1}^K\delta/K=\delta$. Applying Lemma~\ref{thm:srht} to $\widehat S$ similarly gives $\Pr\{\widehat{\mathcal E}_\epsilon(U)^c\}\leq\delta$. Thus, another application of Boole's inequality gives
	\(
	\Pr\{\mathcal E_\epsilon(U)\cap\widehat{\mathcal E}_\epsilon(U)\}
	=1-\Pr\{\mathcal E_\epsilon(U)^c\cup\widehat{\mathcal E}_\epsilon(U)^c\}\geq1-2\delta.
	\)
\end{proof}

The following standard CG estimate will be used repeatedly \cite{nocedal2006numerical}.

\begin{lemma}[CG convergence]\label{lem:cg-convergence}
	Let $B\in\reals^{n\times d}$ have full column rank, and apply CG to $B^\top Bx=B^\top b$. If $x_j$ is the $j$th iterate and $x_\star=(B^\top B)^{-1}B^\top b$, then
	\[
	\|B(x_j-x_\star)\|\leq2\left(\frac{\sqrt{\kappa(B^\top B)}-1}{\sqrt{\kappa(B^\top B)}+1}\right)^j\|B(x_0-x_\star)\|.
	\]
\end{lemma}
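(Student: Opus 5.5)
We prove this along the classical Krylov/Chebyshev route. Set $A:=B^\top B$, which is symmetric positive definite because $B$ has full column rank, and set $e_j:=x_j-x_\star$. The first observation is that the quantity being bounded is the energy norm of the error: $\|B(x_j-x_\star)\|^2=e_j^\top Ae_j=\|e_j\|_A^2$. So it suffices to prove
\[
\|e_j\|_A\leq2\left(\frac{\sqrt{\kappa(A)}-1}{\sqrt{\kappa(A)}+1}\right)^j\|e_0\|_A .
\]

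The second step is to invoke the optimality property of CG applied to $Ax=B^\top b$ from $x_0$. The iterate $x_j$ minimizes $\|x-x_\star\|_A$ over the affine Krylov space $x_0+\mathcal K_j(A,r_0)$, where $r_0=B^\top b-Ax_0=-Ae_0$. Equivalently, $e_j=p_j(A)e_0$ for the polynomial $p_j$ that minimizes $\|p(A)e_0\|_A$ over all polynomials $p$ of degree at most $j$ with $p(0)=1$. Now expand $e_0$ in an orthonormal eigenbasis of $A$ with eigenvalues $\lambda_1,\ldots,\lambda_d\in[\lambda_{\min},\lambda_{\max}]$. This gives, for every admissible $p$,
\[
\|e_j\|_A\leq\|p(A)e_0\|_A\leq\max_{\lambda\in[\lambda_{\min},\lambda_{\max}]}|p(\lambda)|\,\|e_0\|_A .
\]

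The third step is to choose $p$ as the shifted and scaled Chebyshev polynomial
\[
p(\lambda)=\frac{T_j\!\left(\frac{\lambda_{\max}+\lambda_{\min}-2\lambda}{\lambda_{\max}-\lambda_{\min}}\right)}{T_j\!\left(\frac{\kappa+1}{\kappa-1}\right)},\qquad \kappa=\kappa(A).
\]
It satisfies $p(0)=1$, and since $|T_j|\leq1$ on $[-1,1]$ we get $\max_{\lambda\in[\lambda_{\min},\lambda_{\max}]}|p(\lambda)|\leq1/T_j\!\left(\frac{\kappa+1}{\kappa-1}\right)$. It remains to bound this denominator from below. Write $T_j(z)=\tfrac12\bigl[(z+\sqrt{z^2-1})^j+(z-\sqrt{z^2-1})^j\bigr]$ for $z\ge1$ and plug in $z=(\kappa+1)/(\kappa-1)$. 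A short computation gives $z+\sqrt{z^2-1}=(\sqrt\kappa+1)/(\sqrt\kappa-1)$. Dropping the nonnegative second term then yields $T_j(z)\geq\tfrac12\bigl((\sqrt\kappa+1)/(\sqrt\kappa-1)\bigr)^j$, and the claimed factor $2\bigl((\sqrt\kappa-1)/(\sqrt\kappa+1)\bigr)^j$ follows. The degenerate case $\kappa=1$ needs a separate remark: then $A$ is a multiple of the identity and CG terminates at $j=1$, so the bound holds trivially.

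I expect the main obstacle to be the optimality step, because everything after it is a routine Chebyshev calculation. One must justify that CG iterates minimize the $A$-norm error over the Krylov space. This follows from the $A$-orthogonality of the error $e_j$ to $\mathcal K_j(A,r_0)$, which is the Galerkin condition. One also needs to identify $x_0+\mathcal K_j$ with polynomials $p(A)e_0$ satisfying $p(0)=1$. Both facts are standard, and I would cite \cite{nocedal2006numerical} for them rather than re-derive them.
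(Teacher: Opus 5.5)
Your proof is correct and matches the paper's route. The paper gives no argument for this lemma; it invokes it as the standard CG estimate from Nocedal and Wright. Your derivation is the classical proof behind that citation: you identify $\|B(x_j-x_\star)\|$ with the $A$-norm error, use the Krylov optimality of CG, bound the error via the shifted Chebyshev polynomial with $z+\sqrt{z^2-1}=(\sqrt\kappa+1)/(\sqrt\kappa-1)$, and treat $\kappa=1$ separately.
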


In the remainder of the appendix, set $U=U_X$ and abbreviate $\mathcal E_\epsilon:=\mathcal E_\epsilon(U_X)$ and $\widehat{\mathcal E}_\epsilon:=\widehat{\mathcal E}_\epsilon(U_X)$.

\begin{lemma}\label{lem:norm-transfer}
	On the event $\mathcal E_\epsilon$, for any $v\in\reals^d$ and $i=1,\ldots,K$, $\sqrt{1-\epsilon}\|Xv\|\leq\|S_iXv\|\leq\sqrt{1+\epsilon}\|Xv\|$.
\end{lemma}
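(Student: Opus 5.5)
The plan is to convert the spectral condition defining the event $\mathcal E_\epsilon$ directly into the two-sided norm inequality. Fix $i\in\{1,\ldots,K\}$ and $v\in\reals^d$. Since $X=U_XD_XV_X^\top$, set $w:=D_XV_X^\top v\in\reals^d$, so that $Xv=U_Xw$. Because $U_X$ has orthonormal columns, $\|Xv\|^2=\|U_Xw\|^2=\|w\|^2$.

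Next, write the sketched quantity as a quadratic form:
\[
\|S_iXv\|^2=w^\top U_X^\top S_i^\top S_iU_Xw=\|w\|^2+w^\top\bigl(U_X^\top S_i^\top S_iU_X-I_d\bigr)w .
\]
On $\mathcal E_\epsilon$ the symmetric matrix $U_X^\top S_i^\top S_iU_X-I_d$ has spectral norm at most $\epsilon$. Hence its quadratic form is bounded in absolute value by $\epsilon\|w\|^2$. This gives
\[
(1-\epsilon)\|Xv\|^2\leq\|S_iXv\|^2\leq(1+\epsilon)\|Xv\|^2 .
\]
Taking square roots is legitimate because $\epsilon<1$, and it yields the claim.

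Because $\mathcal E_\epsilon$ is the intersection over all $i$, the same bound holds simultaneously for every $i=1,\ldots,K$ and every $v$. There is no real obstacle. The only points to watch are that the embedding condition is stated for the orthonormal basis $U_X$ rather than for $X$ itself, which the change of variables $w=D_XV_X^\top v$ handles, and that the bound on the quadratic form uses the symmetry of the perturbation matrix.
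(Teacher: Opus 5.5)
Your proof is correct and follows the paper's argument. The paper turns the embedding condition into $(1-\epsilon)X^\top X\preceq X^\top S_i^\top S_iX\preceq(1+\epsilon)X^\top X$ via the compact SVD, takes quadratic forms in $v$, and then takes square roots; your substitution $w=D_XV_X^\top v$ is the same congruence done at the vector level. (Symmetry is not actually needed for $|w^\top Aw|\leq\|A\|\|w\|^2$, which holds for any square $A$ by Cauchy--Schwarz.)
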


\begin{proof}
	On \(\mathcal E_\epsilon\), we have
	\(
	(1-\epsilon)I_d
	\preceq U_X^\top S_i^\top S_iU_X
	\preceq(1+\epsilon)I_d.
	\)
	Using the compact SVD \(X=U_XD_XV_X^\top\), we obtain
	\begin{equation}\label{eq:app-stage-loewner}
		(1-\epsilon)X^\top X
		\preceq X^\top S_i^\top S_iX
		\preceq(1+\epsilon)X^\top X.
	\end{equation}
	Taking the quadratic forms of \eqref{eq:app-stage-loewner} with an arbitrary
	\(v\in\mathbb R^d\) and then taking square roots proves the result.
\end{proof}

\begin{lemma}\label{lem:preconditioned-hessians}
	On the event
	$\mathcal E_\epsilon\cap\widehat{\mathcal E}_\epsilon$,
	\begin{equation}\label{eq:app-sketched-hessian}
		\frac{1-\epsilon}{1+\epsilon}I_d
		\preceq
		\widehat H^{-1/2}(S_iX)^\top S_iX\widehat H^{-1/2}
		\preceq
		\frac{1+\epsilon}{1-\epsilon}I_d.
	\end{equation}
	On the event $\widehat{\mathcal E}_\epsilon$,
	\begin{equation}\label{eq:app-full-hessian}
		\frac{1}{1+\epsilon}I_d
		\preceq
		\widehat H^{-1/2}X^\top X\widehat H^{-1/2}
		\preceq
		\frac{1}{1-\epsilon}I_d.
	\end{equation}
\end{lemma}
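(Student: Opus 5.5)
The plan is to reduce both statements to Loewner-order sandwiches for $\widehat H$ and for the sketched Hessians, and then conjugate by $\widehat H^{-1/2}$. Congruence by a nonsingular matrix preserves the Loewner order, so bounds of the form $\alpha\widehat H\preceq G\preceq\gamma\widehat H$ transfer directly to $\alpha I_d\preceq\widehat H^{-1/2}G\widehat H^{-1/2}\preceq\gamma I_d$.

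First, on $\widehat{\mathcal E}_\epsilon$ we have $(1-\epsilon)I_d\preceq U_X^\top\widehat S^\top\widehat SU_X\preceq(1+\epsilon)I_d$. We conjugate by $D_XV_X^\top$, exactly as in the proof of Lemma~\ref{lem:norm-transfer}, and use $X=U_XD_XV_X^\top$ with $X^\top X=V_XD_X^2V_X^\top$. This gives
\[
(1-\epsilon)X^\top X\preceq \widehat H\preceq(1+\epsilon)X^\top X .
\]
Since $\epsilon<1$ and $X$ has full column rank, $\widehat H$ is positive definite, so $\widehat H^{-1/2}$ exists. Conjugating this sandwich by $\widehat H^{-1/2}$ gives
\[
(1-\epsilon)M\preceq I_d\preceq(1+\epsilon)M, \qquad M:=\widehat H^{-1/2}X^\top X\widehat H^{-1/2}.
\]
Read as scalar bounds on the eigenvalues of $M$, this says every eigenvalue lies in $[1/(1+\epsilon),\,1/(1-\epsilon)]$, which is \eqref{eq:app-full-hessian}. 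Equivalently, one can invert the sandwich using operator monotonicity of $A\mapsto -A^{-1}$ on positive definite matrices; the eigenvalue reading avoids that step.

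For \eqref{eq:app-sketched-hessian}, on $\mathcal E_\epsilon$ the Loewner form \eqref{eq:app-stage-loewner} of Lemma~\ref{lem:norm-transfer} gives $(1-\epsilon)X^\top X\preceq (S_iX)^\top S_iX\preceq(1+\epsilon)X^\top X$. Conjugating by $\widehat H^{-1/2}$ yields $(1-\epsilon)M\preceq \widehat H^{-1/2}(S_iX)^\top S_iX\widehat H^{-1/2}\preceq(1+\epsilon)M$. On the intersection with $\widehat{\mathcal E}_\epsilon$ we can insert \eqref{eq:app-full-hessian}, i.e.\ $M\succeq I_d/(1+\epsilon)$ and $M\preceq I_d/(1-\epsilon)$, to get the bounds $(1-\epsilon)/(1+\epsilon)$ and $(1+\epsilon)/(1-\epsilon)$.

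The argument has no real obstacle. The only point needing care is converting the sandwich with $\widehat H$ in the middle into bounds on $M$: this requires $\widehat H\succ0$, which follows from $\epsilon<1$, and then either the eigenvalue reading or the inverse-monotonicity step. Everything else is congruence preserving $\preceq$.
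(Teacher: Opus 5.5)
Your proof is correct and follows the paper's route. Both derive $(1-\epsilon)X^\top X\preceq\widehat H\preceq(1+\epsilon)X^\top X$ from the SVD, chain it with \eqref{eq:app-stage-loewner}, and conjugate by $\widehat H^{-1/2}$. The paper chains before conjugating and gets \eqref{eq:app-full-hessian} last, while you conjugate first and use \eqref{eq:app-full-hessian} as the intermediate step; the order makes no difference.

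The step you flag as needing care is simpler than you suggest. From $(1-\epsilon)M\preceq I_d$ you get $M\preceq(1-\epsilon)^{-1}I_d$ just by dividing by the positive scalar $1-\epsilon$, so you need neither an eigenvalue argument nor monotonicity of matrix inversion.
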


\begin{proof}
	On $\widehat{\mathcal E}_\epsilon$, the SVD argument used above gives
	\begin{equation}\label{eq:app-preconditioner-loewner}
		(1-\epsilon)X^\top X
		\preceq
		X^\top\widehat S^\top\widehat SX
		=\widehat H
		\preceq
		(1+\epsilon)X^\top X.
	\end{equation}
	On $\mathcal E_\epsilon\cap\widehat{\mathcal E}_\epsilon$,
	combining \eqref{eq:app-stage-loewner} and
	\eqref{eq:app-preconditioner-loewner} yields
	\(
	(S_iX)^\top S_iX
	\succeq
	(1-\epsilon)X^\top X
	\succeq
	[(1-\epsilon)/(1+\epsilon)]\widehat H
	\), and
	\(
	(S_iX)^\top S_iX
	\preceq
	(1+\epsilon)X^\top X
	\preceq
	[(1+\epsilon)/(1-\epsilon)]\widehat H.
	\)
	Therefore,
	\(
	[(1-\epsilon)/(1+\epsilon)]\widehat H
	\preceq
	(S_iX)^\top S_iX
	\preceq
	[(1+\epsilon)/(1-\epsilon)]\widehat H.
	\)
	Since $X$ has full column rank and $\epsilon<1$,
	\eqref{eq:app-preconditioner-loewner} implies
	$\widehat H\succ0$. Applying a congruence transformation with
	$\widehat H^{-1/2}$ proves \eqref{eq:app-sketched-hessian}.
	The bound \eqref{eq:app-full-hessian} follows analogously by
	rearranging \eqref{eq:app-preconditioner-loewner} and applying
	the same congruence transformation.
\end{proof}

\subsection{Proof of Theorem~\ref{thm:stage}}\label{app:stage-proof}
For any $v\in\reals^d$, define $\check v=\widehat H^{1/2}v$. The preconditioned normal equation for the $i$th sketched subproblem is
\begin{equation}\label{eq:app-preconditioned-sketched}
	\widehat H^{-1/2}(S_iX)^\top S_iX\widehat H^{-1/2}\check\beta
	=\widehat H^{-1/2}(S_iX)^\top S_iY.
\end{equation}

\begin{proof}
	Set $\check X_i:=S_iX\widehat H^{-1/2}$, which has full column rank on $\mathcal E_\epsilon\cap\widehat{\mathcal E}_\epsilon$. Applying Lemma~\ref{lem:cg-convergence} to \eqref{eq:app-preconditioned-sketched} gives
	\begin{equation}\label{eq:app-cg-sketched}
		\|\check X_i(\check\beta_{a_i}^i-\check{\tilde\beta}^{i})\|
		\leq2q_i^{a_i}\|\check X_i(\check\beta_0^i-\check{\tilde\beta}^{i})\|,
		\qquad
		q_i:=\frac{\sqrt{\kappa_i}-1}{\sqrt{\kappa_i}+1},
	\end{equation}
	where $\kappa_i:=\kappa(\check X_i^\top\check X_i)$. Since $\check X_i\check v=S_iXv$, \eqref{eq:app-cg-sketched} is equivalent to
	$\|S_iX(\beta_{a_i}^i-\tilde\beta^i)\|\leq2q_i^{a_i}\|S_iX(\beta_0^i-\tilde\beta^i)\|$. Lemma~\ref{lem:preconditioned-hessians} gives $\kappa_i\leq[(1+\epsilon)/(1-\epsilon)]^2$. Since \(f(x)=(x-1)/(x+1)\) is increasing on \((-1,+\infty)\), $q_i\leq\frac{(1+\epsilon)/(1-\epsilon)-1}{(1+\epsilon)/(1-\epsilon)+1}=\epsilon$. Combining these bounds with Lemma~\ref{lem:norm-transfer} yields
	\begin{equation}\label{eq:app-stage-intermediate}
		\|X(\beta_{a_i}^i-\tilde\beta^i)\|
		\leq2\sqrt{\frac{1+\epsilon}{1-\epsilon}}\,\epsilon^{a_i}\|X(\beta_0^i-\tilde\beta^i)\|.
	\end{equation}
	Since $\epsilon<1/9$, $2\epsilon\sqrt{(1+\epsilon)/(1-\epsilon)}<\sqrt{5}/9<1/4$ and $\epsilon<1/4$.
	Hence, for any $a_i\geq1$, $2\sqrt{(1+\epsilon)/(1-\epsilon)}\,\epsilon^{a_i}=\{2\epsilon\sqrt{(1+\epsilon)/(1-\epsilon)}\}\epsilon^{a_i-1}\leq(1/4)^{a_i}$. 
	With $\theta=1/4$, \eqref{eq:app-stage-intermediate} therefore reduces to
	\begin{equation}\label{eq:app-stage-exact}
		\|X(\beta_{a_i}^i-\tilde\beta^i)\|\leq\theta^{a_i}\|X(\beta_0^i-\tilde\beta^i)\|.
	\end{equation}
	Applying the triangle inequality to both terms in \eqref{eq:app-stage-exact} gives
	\begin{equation}\label{eq:app-stage-exact2}
		\|X(\beta_{a_i}^i-\beta)\|\leq\theta^{a_i}\|X(\beta_0^i-\beta)\|+(1+\theta^{a_i})\|X(\tilde\beta^i-\beta)\|.
	\end{equation}
	Taking expectations on both sides of \eqref{eq:app-stage-exact2} with respect to the noise and applying Lemma~\ref{lem:simultaneous-embedding} proves that \eqref{eq:stage} holds with probability at least $1-2\delta$.
\end{proof}

\subsection{Proof of Theorem~\ref{thm:global}}\label{app:global-proof}
We first establish the error bounds for the two stages separately.

\begin{lemma}[First-stage error]\label{lem:first-stage}
	Under the assumptions of Theorem~\ref{thm:stage}, with probability at least $1-2\delta$, it holds that
	\(
	\E\|X(\beta_{T^\dagger}-\beta)\|\leq\theta^{T^\dagger}\E\|X(\beta_0-\beta)\|+\sum_{i=1}^K(1+\theta^{a_i})\theta^{T^\dagger-\mathcal T_i}\delta_i.
	\)
\end{lemma}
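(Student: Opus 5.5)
The plan is to unroll the stagewise bound of Theorem~\ref{thm:stage} by induction over the subproblems. All $K$ stage bounds hold on the single event $\mathcal E_\epsilon\cap\widehat{\mathcal E}_\epsilon$. By Lemma~\ref{lem:simultaneous-embedding}, this event has probability at least $1-2\delta$, so no further union bound is needed. I will work deterministically on this event, using the pathwise inequality \eqref{eq:app-stage-exact2} for every $i$ simultaneously, and take expectations over the noise only at the end. Alternatively, one can apply \eqref{eq:stage} directly, since it holds for all $i$ on the same event.

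Write $e_i:=\E\|X(\beta_{a_i}^i-\beta)\|$ and $e_0:=\E\|X(\beta_0-\beta)\|$. Because of the warm start $\beta_0^i=\beta_{a_{i-1}}^{i-1}$, \eqref{eq:stage} gives the linear recursion
\[
e_i\leq\theta^{a_i}e_{i-1}+(1+\theta^{a_i})\delta_i,\qquad i=1,\ldots,K.
\]
I will prove by induction on $k$ that
\[
e_k\leq\theta^{\mathcal T_k}e_0+\sum_{i=1}^k(1+\theta^{a_i})\theta^{\mathcal T_k-\mathcal T_i}\delta_i.
\]
For $k=1$ this is \eqref{eq:stage} itself, since $\mathcal T_1=a_1$ and $\theta^{\mathcal T_1-\mathcal T_1}=1$. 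For the inductive step, substitute the bound for $e_{k-1}$ into the recursion. The contraction factor $\theta^{a_k}$ turns $\theta^{\mathcal T_{k-1}}$ into $\theta^{\mathcal T_k}$, because $\mathcal T_k=\mathcal T_{k-1}+a_k$, and turns each weight $\theta^{\mathcal T_{k-1}-\mathcal T_i}$ into $\theta^{\mathcal T_k-\mathcal T_i}$. The new term $(1+\theta^{a_k})\delta_k$ is the $i=k$ summand, whose weight is $\theta^0$.

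Setting $k=K$ and using $\mathcal T_K=T^\dagger$ and $\beta_{T^\dagger}=\beta^K_{a_K}$ gives the claim. The only real subtlety is in the probability quantifier. The statement must hold for all stages simultaneously with probability $1-2\delta$, rather than stage by stage with separate failure events. This is ensured because the proof of Theorem~\ref{thm:stage} uses only the event $\mathcal E_\epsilon\cap\widehat{\mathcal E}_\epsilon$, which already contains the union over $i$. Expectations are taken over the noise only, and the sketches are independent of it, so conditioning on that event causes no difficulty.
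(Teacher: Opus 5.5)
Your proposal is correct and follows the paper's proof: both use the warm start $\beta_0^i=\beta_{a_{i-1}}^{i-1}$ and Theorem~\ref{thm:stage} to get the recursion $e_i\leq\theta^{a_i}e_{i-1}+(1+\theta^{a_i})\delta_i$, then unroll it by induction (via $\mathcal T_k=\mathcal T_{k-1}+a_k$) and set $i=K$. Your remark that all $K$ stage bounds hold on the single event $\mathcal E_\epsilon\cap\widehat{\mathcal E}_\epsilon$, with noise expectations taken pathwise on that event, makes explicit what the paper leaves implicit in the ``for any $i$'' quantifier of Theorem~\ref{thm:stage}.
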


\begin{proof}
	Let $e_0:=\E\|X(\beta_0-\beta)\|$ and $e_i:=\E\|X(\beta_{a_i}^i-\beta)\|$. Theorem~\ref{thm:stage} gives $e_1\leq\theta^{a_1}e_0+(1+\theta^{a_1})\delta_1$. For $i\geq2$, the warm start gives $e_i\leq\theta^{a_i}e_{i-1}+(1+\theta^{a_i})\delta_i$. At the second subproblem, this yields $e_2\leq\theta^{a_1+a_2}e_0+(1+\theta^{a_1})\theta^{a_2}\delta_1+(1+\theta^{a_2})\delta_2$.
	Proceeding inductively, for any $i=1,\ldots,K$, we obtain
	\(
	e_i\leq\theta^{\mathcal T_i}e_0+\sum_{\ell=1}^i(1+\theta^{a_\ell})\theta^{\mathcal T_i-\mathcal T_\ell}\delta_\ell.
	\)
	Setting $i=K$ and using $\mathcal T_K=T^\dagger$ and $\beta_{T^\dagger}=\beta_{a_K}^K$ leads to the result.
\end{proof}

\begin{lemma}[Second-stage error]\label{lem:full-pcg}
	Under the assumptions of Theorem~\ref{thm:stage}, with probability at least $1-2\delta$, for any $T\geq T^\dagger$,
	\(
	\E\|X(\beta_T-\hat\beta)\|\leq\theta^{T-T^\dagger}\E\|X(\beta_{T^\dagger}-\hat\beta)\|.
	\)
\end{lemma}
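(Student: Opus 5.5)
We follow the proof of Theorem~\ref{thm:stage}, now applied to the full preconditioned system instead of a sketched one. We work on the event $\widehat{\mathcal E}_\epsilon$. By Lemma~\ref{lem:simultaneous-embedding}, this event has probability at least $1-\delta\geq1-2\delta$; we may equally work on $\mathcal E_\epsilon\cap\widehat{\mathcal E}_\epsilon$, so that the statement holds on the same event as Lemma~\ref{lem:first-stage}. Set $\check X:=X\widehat H^{-1/2}$. It has full column rank because $\widehat H\succ0$, by \eqref{eq:app-preconditioner-loewner}. Stage~II runs PCG on $(X,Y)$ with preconditioner $\widehat H$, started from $\beta_{T^\dagger}$. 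This is equivalent to running CG on the preconditioned normal equation $\check X^\top\check X\check\beta=\check X^\top Y$, started from $\check\beta_{T^\dagger}=\widehat H^{1/2}\beta_{T^\dagger}$. Its exact solution is $\check{\hat\beta}=\widehat H^{1/2}\hat\beta$.

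Applying Lemma~\ref{lem:cg-convergence} with $B=\check X$ and $j=s=T-T^\dagger$ iterations gives
\[
\|X(\beta_T-\hat\beta)\|=\|\check X(\check\beta_T-\check{\hat\beta})\|\leq2q^{s}\|X(\beta_{T^\dagger}-\hat\beta)\|,
\qquad q:=\frac{\sqrt{\kappa}-1}{\sqrt{\kappa}+1},
\]
where $\kappa:=\kappa(\check X^\top\check X)$. By \eqref{eq:app-full-hessian}, $\kappa\leq(1+\epsilon)/(1-\epsilon)$. Hence $\sqrt\kappa\leq\sqrt{(1+\epsilon)/(1-\epsilon)}$, and since $f(x)=(x-1)/(x+1)$ is increasing, $q\leq\epsilon$. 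This is in fact a better bound than in the sketched case. Unlike Theorem~\ref{thm:stage}, no norm transfer through Lemma~\ref{lem:norm-transfer} is needed, because the CG energy norm here is already $\|X\cdot\|$.

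When $s\geq1$, we use $\epsilon<1/9$ to get $2q^s\leq2\epsilon\cdot\epsilon^{s-1}<(2/9)(1/9)^{s-1}\leq(1/4)^s=\theta^s$. This gives the deterministic bound $\|X(\beta_T-\hat\beta)\|\leq\theta^{s}\|X(\beta_{T^\dagger}-\hat\beta)\|$ on the event.

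The case $s=0$ is trivial: then $\beta_T=\beta_{T^\dagger}$ and $\theta^0=1$.

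The event depends only on the sketching matrices and not on the noise $\zeta$. We therefore take the expectation with respect to $\zeta$ on both sides, conditionally on the sketches, and obtain the claimed inequality with probability at least $1-2\delta$.

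The only delicate point is the constant: the factor $2$ in the CG bound must be absorbed into $\theta^s$ for every $s\geq1$, and this is exactly where the assumption $\epsilon<1/9$ is used. The remaining steps are the same change-of-variables bookkeeping as in the proof of Theorem~\ref{thm:stage}.
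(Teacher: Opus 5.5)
Your proof is correct and follows essentially the same route as the paper: change variables via $\widehat H^{1/2}$, apply Lemma~\ref{lem:cg-convergence}, bound $q\le\epsilon$ using Lemma~\ref{lem:preconditioned-hessians}, absorb the factor $2$ into $\theta^s$ via $\epsilon<1/9$, and take noise expectations on an event that depends only on the sketches. Two of your details differ from the paper but do not change the argument: your bound $\kappa\le(1+\epsilon)/(1-\epsilon)$, read off from \eqref{eq:app-full-hessian}, is sharper than the paper's $[(1+\epsilon)/(1-\epsilon)]^2$, and you correctly note that the event $\widehat{\mathcal E}_\epsilon$ alone suffices.
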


\begin{proof}
	In the second stage, PCG is applied to 
	$\widehat H^{-1/2}X^\top X\widehat H^{-1/2}\check\beta
	=\widehat H^{-1/2}X^\top Y$.
	As in the proof of Theorem~\ref{thm:stage}, let
	$A:=\widehat H^{-1/2}X^\top X\widehat H^{-1/2}$ and
	$q:=(\sqrt{\kappa(A)}-1)/(\sqrt{\kappa(A)}+1)$.
	After $s=T-T^\dagger$ iterations, Lemma~\ref{lem:cg-convergence} implies
	$\|X(\beta_T-\hat\beta)\|
	\leq2q^s\|X(\beta_{T^\dagger}-\hat\beta)\|$.
	By Lemma~\ref{lem:preconditioned-hessians},
	$\kappa(A)\leq[(1+\epsilon)/(1-\epsilon)]^2$, which implies
	$q\leq\epsilon$. For $s\geq1$, the condition $\epsilon<1/9$ ensures
	$2\epsilon^s\leq(1/4)^s=\theta^s$, while the case $s=0$ is immediate.
	Taking expectations with respect to the noise and applying
	Lemma~\ref{lem:simultaneous-embedding} establishes the result with
	probability at least $1-2\delta$.
\end{proof}

\begin{proof}[Proof of Theorem~\ref{thm:global}]
	By Lemma~\ref{lem:full-pcg} and the triangle inequality,
	\begin{align*}
		\E\|X(\beta_T-\beta)\|
		&\leq \E\|X(\beta_T-\hat\beta)\|+\delta_{\rm OLS} 
		\leq \theta^s\E\|X(\beta_{T^\dagger}-\hat\beta)\|
		+\delta_{\rm OLS} 
		\leq \theta^s\E\|X(\beta_{T^\dagger}-\beta)\|
		+(1+\theta^s)\delta_{\rm OLS}.
	\end{align*}
	Combining this estimate with Lemma~\ref{lem:first-stage} and using
	$s+T^\dagger=T$ results in
	\begin{align*}
		\E\|X(\beta_T-\beta)\|
		\leq\theta^T\E\|X(\beta_0-\beta)\|
		+\sum_{i=1}^K(1+\theta^{a_i})\theta^{T-\mathcal T_i}\delta_i
		+(1+\theta^s)\delta_{\rm OLS},
	\end{align*}
	which proves \eqref{eq:global-exact}. Since $\delta_i\leq C\delta_{\rm OLS}$ and $a_i\geq1$, $T^\dagger-\mathcal T_i=\sum_{\ell=i+1}^K a_\ell\geq K-i$ and
	$1+\theta^{a_i}\leq1+\theta$. Consequently,
	\begin{align*}
		\sum_{i=1}^K(1+\theta^{a_i})
		\theta^{T^\dagger-\mathcal T_i}\delta_i
		&\leq C\delta_{\rm OLS}(1+\theta)
		\sum_{j=0}^{K-1}\theta^j 
		<\frac{1+\theta}{1-\theta}C\delta_{\rm OLS}
		=\frac{5}{3}C\delta_{\rm OLS}.
	\end{align*}
	Since $T-\mathcal T_i=s+T^\dagger-\mathcal T_i$, substituting this
	estimate into \eqref{eq:global-exact} establishes
	\eqref{eq:global-ols}.
\end{proof}

\subsection{Proof of Theorem~\ref{thm:iteration-bounds}}\label{app:iteration-proof}
We first recall the asymptotic prediction-efficiency result that motivates the comparison between consecutive sketched estimators.

\begin{lemma}[\cite{dobriban2019asymptotics}]\label{prop:prediction-efficiency}
	Suppose that the empirical spectral distribution of $X^\top X$ converges weakly to a compactly supported distribution bounded away from zero. If $d/N\to\gamma\in(0,1)$ and $m_i/N\to\xi_i\in(\gamma,1)$ as $N\to\infty$, then the prediction efficiency of the $i$th SRHT estimator satisfies
	\[
	\mathrm{PE}(i):=\frac{\E\|X(\tilde\beta^i-\beta)\|^2}{\E\|X(\hat\beta-\beta)\|^2}\longrightarrow\frac{1-\gamma}{\xi_i-\gamma}.
	\]
\end{lemma}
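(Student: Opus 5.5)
The plan is to reduce $\mathrm{PE}(i)$ to a spectral functional of a random projection, and then identify its limit with the Haar (uniformly rotated) case. I first handle the noise expectation exactly, conditioning on $S_i$.

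\textbf{Step 1: exact formula for the errors.} Since $Y=X\beta+\zeta$, we have $\tilde\beta^i-\beta=(X^\top S_i^\top S_iX)^{-1}X^\top S_i^\top S_i\zeta$ and $\hat\beta-\beta=(X^\top X)^{-1}X^\top\zeta$. Because $\E\zeta\zeta^\top=\sigma^2I_N$, the denominator is $\sigma^2 d$. Write $X=U_XD_XV_X^\top$; the factors $D_XV_X^\top$ cancel, and the numerator becomes
\[
\sigma^2\operatorname{tr}\bigl[(U_X^\top S_i^\top S_iU_X)^{-1}U_X^\top S_i^\top S_iS_i^\top S_iU_X(U_X^\top S_i^\top S_iU_X)^{-1}\bigr].
\]

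\textbf{Step 2: simplify using the SRHT structure.} For the SRHT, $S_iS_i^\top=(N/m_i)I_{m_i}$, since the rows of $HDP$ are orthonormal. Hence $S_i^\top S_iS_i^\top S_i=(N/m_i)S_i^\top S_i$. Let $W_i:=(HDPU_X)_{\mathcal I_i,:}\in\reals^{m_i\times d}$ be the selected rows of an $N\times d$ matrix with orthonormal columns. Then $U_X^\top S_i^\top S_iU_X=(N/m_i)W_i^\top W_i$, and the two factors $N/m_i$ cancel. Therefore
\[
\mathrm{PE}(i)=\E\,\tfrac1d\operatorname{tr}\bigl[(W_i^\top W_i)^{-1}\bigr].
\]
This quantity depends on $X$ only through $\range(X)$, so the spectral assumption on $X^\top X$ is needed only to fix the scaling regime.

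\textbf{Step 3: compute the limit in the Haar model.} Suppose $HDP$ were replaced by a Haar orthogonal matrix. Then $W_i^\top W_i$ is the compression of a uniformly random $d$-dimensional projection onto $m_i$ coordinates, which is matrix-Beta (Jacobi/MANOVA) distributed. Its inverse has the known mean $\E(W_i^\top W_i)^{-1}=\frac{N-d-1}{m_i-d-1}I_d$. Equivalently, the Wachter law gives the limiting normalized trace of the inverse. In either form, letting $d/N\to\gamma$ and $m_i/N\to\xi_i$ yields $(1-\gamma)/(\xi_i-\gamma)$. The condition $\xi_i>\gamma$ keeps the Wachter spectrum bounded away from $0$, so $x\mapsto1/x$ is bounded on the support and the normalized trace converges.

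\textbf{Step 4 (the main obstacle): universality for SRHT.} The hard part is showing that the randomized Hadamard rotation yields the same limiting spectrum as Haar. I would invoke asymptotic freeness of $HDP\,U_XU_X^\top(HDP)^\top$ from the diagonal coordinate-sampling projection $P_{\mathcal I_i}$. This freeness is known for randomly signed, permuted Hadamard and Fourier transforms, via Anderson--Farrell-type results. With freeness in hand, the empirical spectral distribution of $W_i^\top W_i$ is the free multiplicative convolution of two Bernoulli laws with parameters $\gamma$ and $\xi_i$. That convolution is exactly the Wachter law from the Haar computation.

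Two further points are needed to pass from spectral convergence to convergence of $\mathrm{PE}(i)$:
\begin{enumerate}
\item The trace of the inverse must converge, not just the spectral distribution. This requires that the smallest eigenvalue of $W_i^\top W_i$ stays bounded away from $0$ with high probability. Lemma~\ref{thm:srht} does not supply this directly, since $\xi_i$ is fixed and not large, so a separate edge estimate for SRHT is needed.
\item The expectation over the sketch must converge as well. I would obtain this by dominated convergence, combining high-probability control of $\lambda_{\min}$ with a crude bound on the complementary event.
\end{enumerate}
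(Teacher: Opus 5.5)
Your exact reduction in Steps 1--2 is correct, conditionally on $S_i$, and it is sharper than anything in the paper. Averaging over $\zeta$ and using $S_iS_i^\top=(N/m_i)I_{m_i}$ gives $\mathrm{PE}(i)=\frac1d\operatorname{tr}[(W_i^\top W_i)^{-1}]$. So $\mathrm{PE}(i)$ depends only on $\range(X)$, and the hypothesis on the spectrum of $X^\top X$ plays no role for this efficiency. Your Haar value $\frac{N-d-1}{m_i-d-1}$ is right. So is the role of $\xi_i>\gamma$: the limiting law of $W_i^\top W_i$ has no mass below $\lambda_-=(\sqrt{\xi_i(1-\gamma)}-\sqrt{\gamma(1-\xi_i)})^2$, which is positive iff $\xi_i>\gamma$. (Strictly, the free multiplicative convolution of the two Bernoulli laws describes the $N\times N$ matrix obtained by compressing $HDP\,U_XU_X^\top(HDP)^\top$ to the coordinates in $\mathcal I_i$. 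The law of $W_i^\top W_i$ is obtained by removing its atom of mass $1-\gamma$ at $0$ and renormalizing.)

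The paper argues none of this. Its proof is a one-line specialization of Theorem~2.4 of \cite{dobriban2019asymptotics}, which imports the entire limit. That includes the free-probability universality for the randomized Hadamard transform and the passage from spectral convergence to the trace of the inverse. Your route exposes the mechanism, and it is essentially the argument behind that theorem. It becomes self-contained only once the two points below are fixed.

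\textbf{The sketch average.} The outer expectation in Step 2 is the wrong target, and your dominated-convergence step would fail. The paper takes $\E$ over $\zeta$ only, so $\mathrm{PE}(i)$ is a random variable in the sketch, and the claim is convergence in probability (or almost surely) over $S_i$. Averaging over the sketch as well can give an infinite or undefined quantity for every $N$. The SRHT has finitely many outcomes, and there is no crude bound on the bad event.

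To see this, index the Sylvester matrix by $x,a\in\{0,1\}^{\log_2N}$, so $H_{xa}=(-1)^{\langle x,a\rangle}/\sqrt N$. Take $U_X=[e_1,\dots,e_d]$; this is admissible, because the hypothesis constrains only the singular values of $X$. Then the columns of $W_i$ are restrictions to $\mathcal I_i$ of signed Walsh columns. Two of them, $\pm h_a$ and $\pm h_b$, agree up to sign whenever $\mathcal I_i\subset\{x:\langle x,a\oplus b\rangle=0\}$. This event has positive probability whenever $m_i\le N/2$, as in the paper's schedule, and on it $W_i^\top W_i$ is singular. So drop the sketch average.

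\textbf{The lower edge.} The lower-edge estimate you flag is the genuinely missing ingredient, not a technicality. The upper edge is free, since $W_i^\top W_i\preceq I_d$. But weak convergence of the spectral distribution says nothing about $\frac1d\operatorname{tr}[(W_i^\top W_i)^{-1}]$: a single eigenvalue of size $o(1/d)$ already sends it to infinity. As you note, Lemma~\ref{thm:srht} cannot supply this in the proportional regime, since it would need $m_i/d$ to grow like $\log d$.

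What you need is $\Pr\{\lambda_{\min}(W_i^\top W_i)\geq c\}\to1$ for some $c>0$. Then $1/x$ is bounded and continuous on $[c,1]$, and the spectral convergence of Step 4 finishes the proof. Without such a result, your argument has to fall back on exactly the citation the paper uses.
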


\begin{proof}
	The result follows by specializing Theorem~2.4 of \cite{dobriban2019asymptotics} to SRHT sketch-and-solve estimators.
\end{proof}
Let $g(\gamma,\xi)=(1-\gamma)/(\xi-\gamma)$, which is increasing in $\gamma$ and decreasing in $\xi$. Hence, when $\xi_i>m_i/N>d/N>\gamma$, Lemma~\ref{prop:prediction-efficiency} leads to the finite-sample relation $\mathrm{PE}(i)\leq g(d/N,m_i/N)=(N-d)/(m_i-d)$. Equivalently, letting $Z_i=\|X(\tilde\beta^i-\beta)\|$, $Z_{\rm OLS}=\|X(\hat\beta-\beta)\|$, and $c_i=\sqrt{(N-d)/(m_i-d)}$, we have $\E Z_i^2\leq c_i^2\E Z_{\rm OLS}^2$. If $\operatorname{Var}(Z_i)\geq\operatorname{Var}(c_iZ_{\rm OLS})$, then
\(
(\E Z_i)^2=\E Z_i^2-\operatorname{Var}(Z_i)
\leq c_i^2\bigl[\E Z_{\rm OLS}^2-\operatorname{Var}(Z_{\rm OLS})\bigr]
=c_i^2(\E Z_{\rm OLS})^2,
\)
and therefore $\delta_i\leq c_i\delta_{\rm OLS}$. The subsequent discussions are based on the following two conditions:
\begin{itemize}
	\item $\xi_i>m_i/N>d/N>\gamma$;
	\item $\operatorname{Var}(Z_i)\geq\operatorname{Var}(c_iZ_{\rm OLS})$.
\end{itemize}

\begin{proof}[Proof of Theorem~\ref{thm:iteration-bounds}]
	Fix $i\in\{2,\ldots,K\}$. Apply the preceding comparison to the consecutive estimators $\tilde\beta^{i-1}$ and $\tilde\beta^i$, with $\tilde\beta^i$ serving as the reference estimator and $(m_i,m_{i-1})$ replacing $(N,m_i)$. The monotonicity of $g$ and the first condition imply $\E Z_{i-1}^2\leq r_{i-1,i}^2\E Z_i^2$, while the second condition implies $\operatorname{Var}(Z_{i-1})\geq r_{i-1,i}^2\operatorname{Var}(Z_i)$. Consequently,
	\begin{align*}
		\delta_{i-1}^2
		&=\E Z_{i-1}^2-\operatorname{Var}(Z_{i-1}) 
		\leq r_{i-1,i}^2\bigl[\E Z_i^2-\operatorname{Var}(Z_i)\bigr]
		=r_{i-1,i}^2\delta_i^2.
	\end{align*}
	Since $\delta_{i-1}$ and $\delta_i$ are nonnegative, $\delta_{i-1}\leq r_{i-1,i}\delta_i$. By the warm-start relation $\beta_0^i=\beta_{a_{i-1}}^{i-1}$ and the triangle inequality,
	\begin{equation}\label{eq:app-warm-bound}
		\E\|X(\beta_0^i-\tilde\beta^i)\|\leq\E\|X(\beta_{a_{i-1}}^{i-1}-\tilde\beta^{i-1})\|+\delta_{i-1}+\delta_i.
	\end{equation}
	The stagewise accuracy of subproblem $i-1$ and the interstage relation established above imply
	\begin{equation}\label{eq:app-warm-final}
		\E\|X(\beta_0^i-\tilde\beta^i)\|
		\leq(1+\omega)\delta_{i-1}+\delta_i
		\leq[(1+\omega)r_{i-1,i}+1]\delta_i.
	\end{equation}
	Taking expectations in \eqref{eq:app-stage-exact} and applying \eqref{eq:app-warm-final}, we obtain
	\[
	\E\|X(\beta_{a_i}^i-\tilde\beta^i)\|
	\leq\theta^{a_i}[(1+\omega)r_{i-1,i}+1]\delta_i.
	\]
	Thus, the stagewise accuracy condition is satisfied whenever $\theta^{a_i}[(1+\omega)r_{i-1,i}+1]\leq\omega$. Since $\theta=1/4$, taking logarithms establishes the stated lower bound. Its positivity follows from $\omega\in(0,1)$ and $(1+\omega)r_{i-1,i}+1>1$.
\end{proof}

\subsection{Proof of Theorem~\ref{thm:complexity}}\label{app:complexity-proof}

\begin{proof}[Proof of Theorem~\ref{thm:complexity}]
	For $i=2,\ldots,K$, the doubling schedule implies
	\(
	r_{i-1,i}^2=(m_i-d)/(m_{i-1}-d)
	=2+d/(m_{i-1}-d).
	\)
	Thus, $r_{i-1,i}$ decreases with $i$ and satisfies $0<r_{i-1,i}\leq r_{1,2}=\bar r$. Since the lower bound in Theorem~\ref{thm:iteration-bounds} is increasing in $r_{i-1,i}$, setting $a_i$ equal to this unrounded bound gives
	\[
	\frac{\ln(1/\omega)}{\ln4}<a_i\leq
	\frac{\ln\{[1+(1+\omega)\bar r]/\omega\}}{\ln4}
	=\alpha,
	\qquad i=2,\ldots,K.
	\]
	For the first subproblem, \eqref{eq:app-stage-exact} and the definition of $q_1$ imply
	\(
	\E\|X(\beta_{a_1}^1-\tilde\beta^1)\|
	\leq\theta^{a_1}\E\|X(\beta_0-\tilde\beta^1)\|
	=\theta^{a_1}q_1\delta_1.
	\)
	The stagewise accuracy condition therefore holds if $\theta^{a_1}q_1\leq\omega$. With $\theta=1/4$, the smallest unrounded sufficient value is $a_1=\ln(q_1/\omega)/\ln4$. The assumption $1<q_1<1+(1+\omega)\bar r$ then implies $\ln(1/\omega)/\ln4<a_1<\alpha$. Consequently, $\ln(1/\omega)/\ln4<a_i\leq\alpha$ for any $i=1,\ldots,K$.
	
	We next consider the dominant cost of each stage. The nested sketches $(S_iX,S_iY)$ require a single Hadamard transform with cost $O(Nd\log_2N)$. Constructing and factorizing $\widehat H$ requires $O(rd^2+d^3)$ additional operations. Hence, the initialization cost is $O(Nd\log_2N+rd^2+d^3)$, with dominant term $Nd\log_2N$ in the regime considered here.
	
	One PCG iteration on the $i$th sketched subproblem requires $\{(4m_i+13)d+2d^2\}$ FLOPs. Thus, the first-stage cost is $\sum_{i=1}^Ka_i\{(4m_i+13)d+2d^2\}$. Since $\sum_{i=1}^Km_i=(1-2^{-K})N<N$, $a_i\leq\alpha$, and $K=O(1)$, its dominant term is bounded by $4\alpha Nd$.
	
	Finally, since $\theta=1/4$, the stopping condition $\theta^T\leq\sigma$ corresponds to the smallest unrounded total iteration count $T=\ln(1/\sigma)/\ln4$. Using $a_i>\ln(1/\omega)/\ln4$ for any $i=1,\ldots,K$, we obtain
	\[
	T-T^\dagger
	=\frac{\ln(1/\sigma)}{\ln4}-\sum_{i=1}^Ka_i
	<\frac{\ln(1/\sigma)-K\ln(1/\omega)}{\ln4}=\frac{\ln(\omega^K/\sigma)}{\ln4}.
	\]
	Each full-data PCG iteration has dominant cost $4Nd$ FLOPs. Therefore, the second-stage cost is bounded by $4[\ln(\omega^K/\sigma)/\ln4]Nd$, which is positive when $\omega>\sigma^{1/K}$.
\end{proof}

\section*{Acknowledgments}
The corresponding author is supported by the National Natural Science Foundation of China (Grant Nos. 12071215 and 11101213).

\section*{Data availability}
No data was used for the research described in the article.

\printcredits

\nocite{*}
\bibliographystyle{cas-model2-names}
\bibliography{SPCG-REF}



\end{document}